\newtheorem{theorem}{Theorem}[section]
\newtheorem{lemma}[theorem]{Lemma}
\newtheorem{proposition}[theorem]{Proposition}
\theoremstyle{definition}
\newtheorem{remark}[theorem]{Remark}
\numberwithin{equation}{section}
\DeclareMathOperator{\diam}{diam}
\DeclareMathOperator*{\argmin}{argmin}
\newcommand{\norm}[1]{\left\Vert#1\right\Vert}
\newcommand{\snorm}[1]{\Vert#1\Vert}
\newcommand{\abs}[1]{\left\vert#1\right\vert}
\newcommand{\set}[1]{\left\{#1\right\}}
\newcommand{\brac}[1]{\left(#1\right)}
\newcommand{\scalar}[1]{\left \langle #1 \right \rangle}
\newcommand{\N}{\mathbb{N}}
\newcommand{\R}{\mathbb{R}}
\newcommand{\C}{\mathbb{C}}
\newcommand{\F}{\mathbb{F}}
\newcommand{\FF}{\mathcal{F}}
\newcommand{\E}{\mathbb{E}}
\newcommand{\eps}{\epsilon}
\newcommand{\Id}{{\rm Id}}
\newcommand{\G}{\mathcal{G}}
\newcommand{\PD}{{\rm PD}}
\newcommand{\SPD}{{\rm SPD}}
\newcommand{\GL}{{\rm GL}}
\newcommand{\M}{{\rm M}}
\renewcommand{\P}{\mathbb{P}}
\newlength{\defbaselineskip}
\newcommand{\setlinespacing}[1]           {\setlength{\baselineskip}{#1 \defbaselineskip}}
\renewcommand*{\thefootnote}{\fnsymbol{footnote}}
\begin{document}

\title{Regular Random Sections of Convex Bodies and the\\ Random Quotient-of-Subspace Theorem}

\medskip

\author{Emanuel Milman\textsuperscript{$*$,$\dagger$}
\and
Yuval Yifrach\textsuperscript{$*$,$\ddagger$}
}

\footnotetext[1]{Department of Mathematics, Technion-Israel Institute of Technology, Haifa 32000, Israel.}
\footnotetext[2]{Email: emilman@tx.technion.ac.il.}
\footnotetext[3]{Email: yuval@campus.technion.ac.il.}

\begingroup    \renewcommand{\thefootnote}{}    \footnotetext{2010 Mathematics Subject Classification: 46B07, 52A23, 46B20, 46B70.}
    \footnotetext{Keywords: Quotient-of-Subspace Theorem, Pisier's regular M-position, random Gelfand numbers.}

\date{} 
\maketitle

\begin{abstract}
It was shown by G.~Pisier that any finite-dimensional normed space admits an $\alpha$-regular $M$-position, guaranteeing not only regular entropy estimates but moreover regular estimates on the diameters of minimal sections of its unit-ball and its dual. We revisit Pisier's argument and show the existence of a \emph{different} position, which guarantees the same estimates for \emph{randomly sampled} sections \emph{with high-probability}. As an application, we obtain a \emph{random} version of V.~Milman's Quotient-of-Subspace Theorem, asserting that in the above position, \emph{typical} quotients of subspaces are isomorphic to Euclidean, with a distance estimate which matches the best-known deterministic one (and beating all prior estimates which hold with high-probability). Our main novel ingredient is a new position of convex bodies, whose existence we establish by using topological arguments and a fixed-point theorem.  
\end{abstract}

\maketitle

\section{Introduction}

Let $K$ denote an origin-symmetric convex compact set with non-empty interior (``body") in Euclidean space $(\R^n,\scalar{\cdot,\cdot})$. Note that origin-symmetric convex bodies are precisely the unit-balls of norms on $\R^n$. The polar body $K^{\circ}$ is thus defined as the unit-ball of the corresponding dual-norm, namely $K^{\circ} := \set{ y \in \R^n \; ; \; |\scalar{x,y}| \leq 1 \;\; \forall x \in K }$. Given another convex body $L$, recall that the covering number $N(K,L)$ is defined to be the minimal number of translates of $L$ whose union covers $K$, and that the entropy numbers $e_k(K,L)$ are defined as $\inf \set{ t > 0 \, : \, N(K,t L) \leq 2^{k-1}}$ for $k \in \N$. When $L = B_2^n$, the Euclidean unit-ball, we simply write $e_k(K) := e_k(K,B_2^n)$. 

In a seminal work \cite{Milman-M-pos-reverse-BM-CRAS} (cf. \cite{Milman-IsomorphicSurgery}), it was shown by V.~Milman that for every origin-symmetric convex body $K$ in $\R^n$, there exists a linear image (or position) $\tilde K$, so that:
\[
N(\tilde K,B_2^n) , N(\tilde K^{\circ}, B_2^n) , N(B_2^n , \tilde K) , N(B_2^n , \tilde K^\circ) \leq \exp(C n) ,
\]
for some universal numeric constant $C > 0$ (in fact, only the first two inequalities are required to hold, the last two inequalities follow from the first two by duality of entropy \cite{Konig-Milman, AMS-Duality-For-Ball, EMilman-Duality-of-Entropy}). Throughout this work, $c,C,C',C''$, etc.\ denote positive universal numeric constants, whose value may change from one instance to another. Any linear image $\tilde K$ satisfying the above covering estimates for a fixed constant $C>0$ is called an \emph{$M$-position of $K$}. The importance of the $M$-position stems (in particular) from its relation to Milman's reverse Brunn--Minkowski inequality \cite{Milman-IsomorphicSurgery} and the Bourgain--Milman reverse Blaschke--Santalo inequality \cite{Bourgain-Milman-vr-and-reverse-santalo} (see e.g. \cite[Chapter 8]{AGA-Book-I} or \cite[Chapter 7]{Pisier-Book}), and it has played a key role in numerous central results in the field (see \cite{AGA-Book-I,GreekBook,Pisier-Book} for subsequent developments). 

However, for many applications, one requires additional control over the covering numbers on all scales, with $B_2^n$ above replaced by $t B_2^n$. In another seminal work \cite{Pisier-Regular-M-Position}, the following refinement was obtained by G.~Pisier -- for any origin-symmetric convex body $K$ and for any $\alpha > 1/2$, there exists a position $\tilde K_\alpha$ so that:
\begin{equation} \label{eq:regular-M-pos}
N(\tilde K_\alpha, t B_2^n) , N(\tilde K_\alpha^{\circ}, t B_2^n) \leq \exp\brac{\frac{P_\alpha^{1/\alpha} n}{t^{1/\alpha}}} \;\;\; \forall t \geq 1 ,
\end{equation}
for some constant $P_\alpha$ depending solely on $\alpha$, satisfying:
\begin{equation} \label{eq:P-alpha}
P_\alpha \leq \frac{C}{\sqrt{\alpha-1/2}}  \text{ as $\alpha \rightarrow 1/2$} .
\end{equation}
For many applications it is vital to push $\alpha$ to the best possible value $\alpha = 1/2$, and so the behavior of $P_\alpha$ as $\alpha \rightarrow 1/2$ in (\ref{eq:P-alpha}) is of crucial importance. 
Any position satisfying (\ref{eq:regular-M-pos}) is called an \emph{$\alpha$-regular $M$-position} (with constant $P_\alpha$).

\medskip

In fact, Pisier established in \cite{Pisier-Regular-M-Position} a result which is \emph{stronger} than the covering-regularity estimates of (\ref{eq:regular-M-pos}),
pertaining to the regularity of diameter of sections of convex bodies and their polars.
To state it, define the Gelfand numbers $c_k(K)$, $k=1,\ldots,n$, which provide information on the minimal diameter of a section of $K$ by a $(k-1)$-codimensional subspace:
\[
c_k(K) := \inf \set{ \frac{1}{2} \diam(K \cap F) \; ; \; F \in G_{n,n-k+1} }  .
\]
Here $G_{n,m}$ denotes the Grassmannian of all $m$-dimensional linear subspaces of $\R^n$, equipped with its natural Haar probability measure $\sigma_{n,m}$. 
Note that $c_k(K^{\circ})$ provides dual information on the (reciprocal of the) maximal in-radius of an orthogonal projection of $K$ onto a $(k-1)$-codimensional subspace.
Pisier showed that for any origin-symmetric convex body $K$ and for any $\alpha > 1/2$, the position $\tilde K_\alpha$ satisfies:
\begin{equation} \label{eq:P-pos}
c_k(\tilde K_\alpha) , c_k(\tilde K^{\circ}_\alpha) \leq P_\alpha \brac{\frac{n}{k}}^{\alpha} \;\;\; \forall k = 1,\ldots,n  ,
\end{equation}
with $P_\alpha$ satisfying (\ref{eq:P-alpha}). The covering-regularity estimates (\ref{eq:regular-M-pos}) then follow from (\ref{eq:P-pos}) by an application of Carl's theorem (e.g. \cite[Theorem 5.2]{Pisier-Book}) to $L = \tilde K_\alpha,\tilde K^{\circ}_\alpha$, which implies that:
\[
\sup_{k \leq n} k^{\alpha} e_k(L) \leq C_\alpha \sup_{k \leq n} k^{\alpha} c_k(L) ,
\]
with $C_\alpha$ uniformly bounded for $\alpha$ on any compact subinterval of $(0,\infty)$. 

\subsection{$P$-typical-position}

We begin this work by noting
that while the covering-regularity estimates (\ref{eq:regular-M-pos}) are completely deterministic, 
the regularity estimates for sections and projections (\ref{eq:P-pos}) leave open the question of whether the same estimates hold, not only for a single good section or projection of $\tilde K_\alpha$, but also for a typical randomly lotterred section or projection (according to the natural Haar measure). An inspection of Pisier's argument (see Subsection \ref{subsec:comparison}) reveals that his proof only guarantees (for each $k$) the \emph{existence of a single} good section or projection of $\tilde K_\alpha$. 
In our first result in this work we revisit Pisier's proof, and observe that by applying some additional topological arguments, we can find a \emph{different position} of $K$ for which the estimates (\ref{eq:P-pos}) hold for random subspaces with high-probability:

\begin{theorem} \label{thm:main1}
For every origin-symmetric convex body $K \subset \R^n$ and every $\alpha > 1/2$, there exists a position $\bar K_\alpha$ so that:
\begin{equation} \label{eq:random-P-pos}
\sigma_{n,n-k+1}\set{ F \in G_{n,n-k+1} \; ; \; \diam(L \cap F) > \bar P_\alpha \brac{\frac{n}{k}}^{\alpha} } \leq \exp(- c k) ~,~ \forall L = \bar K_\alpha, \bar K^\circ_\alpha ~,~ \forall k=1,\ldots,n ,
\end{equation}
where $c > 0$ is a universal numeric constant and $P_\alpha = \bar P_\alpha$ satisfies (\ref{eq:P-alpha}). \end{theorem}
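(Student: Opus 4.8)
The plan is to revisit Pisier's proof of the deterministic estimate (\ref{eq:P-pos}), to pinpoint the step at which it yields only a \emph{single} good subspace per scale, and to repair it by constructing a \emph{new} position whose existence is forced by a fixed-point argument. It is convenient to reformulate the goal in terms of ``random Gelfand numbers'': for a symmetric convex body $L\subset\R^n$, let $\hat c_k(L)$ be the least $t$ for which $\sigma_{n,n-k+1}\set{F : \tfrac12\diam(L\cap F) > t}\le e^{-ck}$, so that we must produce a position with $\hat c_k(\bar K_\alpha),\,\hat c_k(\bar K^\circ_\alpha)\le \bar P_\alpha(n/k)^{\alpha}$ for all $k$, with $\bar P_\alpha$ obeying (\ref{eq:P-alpha}). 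Pisier's bound $c_k\le P_\alpha(n/k)^{\alpha}$ is obtained by iterating over dyadic scales $k=2^{j}$ a single step which (i) invokes the \emph{existence} of a subspace along which the $M^*$-width of the current section (resp.\ projection) is near-minimal, and (ii) applies the low-$M^*$ estimate to transport the resulting diameter bound to the next finer scale; the accumulation of per-scale defects is $\ell_2$ in nature, which is exactly why the constant blows up only like $(\alpha-1/2)^{-1/2}$. The obstruction to a random statement is step (i): the near-optimal subspace at scale $2^{j}$ is selected as a function of the one chosen at scale $2^{j-1}$, so only one nested flag is ever controlled, and a Haar-typical subspace has no reason to inherit the estimate.

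The first move is to replace, at each scale, the \emph{infimal} section-width used by Pisier by a \emph{robust} width functional $W_k(T)$ of the position $T$ — an averaged rather than infimal analogue of the $M^*$-width at level $k$ — chosen so that: (i) the sharp form of the low-$M^*$ estimate yields $\diam(TK\cap F)\lesssim\sqrt{n/k}\,W_k(T)$ for a \emph{Haar-random} $F\in G_{n,n-k+1}$ with probability $1-e^{-ck}$, the exponential decay being inherited directly from the Gaussian concentration underlying that estimate (so that no auxiliary concentration argument on the Grassmannian is needed); (ii) $T\mapsto W_k(T)$ is continuous and stable under the iteration, and likewise for the dual functional $W_k^\circ(T)$ attached to $(TK)^\circ=T^{-*}K^\circ$. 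Chaining the low-$M^*$ estimate and its dual across the dyadic scales, with Pisier's $\ell_2$-bookkeeping, then shows: if one position $T$ achieves $W_k(T),\,W_k^\circ(T)\lesssim\bar P_\alpha(n/k)^{\alpha-1/2}$ simultaneously for all $k$, then (since $\sqrt{n/k}\cdot(n/k)^{\alpha-1/2}=(n/k)^{\alpha}$) the body $\bar K_\alpha:=TK$ satisfies (\ref{eq:random-P-pos}).

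Such a $T$ cannot be obtained by plain minimization, as the $W_k$ at different scales, and the body versus its polar, pull against one another. This is where the topology enters. I would set up a continuous self-map $\mathcal{A}$ on a compact convex set of normalized positions — a bounded, uniformly non-degenerate slice $\{\tfrac1R\Id\preceq A\preceq R\,\Id\}$ of $\SPD(n)$ (with the $W_k$ taken scale-invariant), or equivalently a compact family of ellipsoids — where $\mathcal{A}(T)$ is the ``$M$-ellipsoid correction'' adapted simultaneously to all scales: an averaged analogue of one step of Pisier's construction. Once continuity of $\mathcal{A}$ and invariance of the domain are verified, Brouwer's fixed-point theorem produces a fixed point, and one argues that a fixed point is precisely a position in which every section and projection at a given dyadic scale is, \emph{on average and hence with probability} $1-e^{-ck}$, already well-placed within its own subspace; this self-consistency is what allows the iteration to close for Haar-random, not merely hand-picked, subspaces. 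The fixed point is the new position $\bar K_\alpha$.

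I expect this third step to be the crux: one must choose the ambient compact convex domain and the self-map so that (a) $\mathcal{A}$ is continuous and maps the domain into itself, (b) the a priori bounds keep every intermediate position non-degenerate along the iteration, and (c) a fixed point genuinely encodes near-optimality of all the $W_k,W_k^\circ$ at once, with the \emph{sharp} dependence on $\alpha$, so that the accumulated constant remains $\bar P_\alpha\lesssim(\alpha-1/2)^{-1/2}$ as in (\ref{eq:P-alpha}). Granting this, the remainder is routine: fixing $k$, split a Haar-random $F\in G_{n,n-k+1}$ as a random subspace inside a random coarser one, control the relevant robust width of the coarser section and apply the low-$M^*$ estimate to pass to $F$ — all with probability $1-e^{-ck}$ — to obtain (\ref{eq:random-P-pos}); the statement for $\bar K^\circ_\alpha$ falls out of the same fixed point since $W_k^\circ$ was built in, and $\hat c_k\ge c_k$ together with Carl's theorem then also recovers (\ref{eq:regular-M-pos})--(\ref{eq:P-pos}) for $\bar K_\alpha$ as a byproduct.
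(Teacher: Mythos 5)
Your high-level instinct is right: the way to a typical-position statement is to manufacture a new position via a fixed-point theorem rather than by minimization, and the defect in Pisier's argument that has to be repaired is indeed that his near-optimal subspace is hand-picked. But the concrete mechanism you propose does not match the paper's and, as written, has genuine gaps.

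First, the paper's proof does \emph{not} iterate the low-$M^*$ estimate over dyadic scales with ``$\ell_2$-bookkeeping''; there is no chaining at all. Pisier's (and the paper's) argument hinges on complex interpolation: for $\theta=1-\tfrac1{2\alpha}$, one arranges for the interpolant $[T(K),B_2^n]_\theta$ (with the \emph{standard} Euclidean ball as the second endpoint) to be in the $\ell$-position. Pisier's estimate on the Rademacher projection of interpolants, $\snorm{Rad_{X_{[K,B_2^n]_\theta}}}\le \Phi(\theta)=\cot(\pi\theta/4)$, then controls $\ell\cdot\ell^*$ of the interpolant, a \emph{single} application of the low-$M^*$ estimate gives a probabilistic diameter bound for sections of the interpolant, and the elementary interpolation inequality $R([L,B_2^n]_\theta\cap E)\ge R(L\cap E)^{1-\theta}$ transfers this to $L$ itself. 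The sharp dependence $\bar P_\alpha\lesssim(\alpha-1/2)^{-1/2}$ comes purely from $\Phi(\theta)^{\alpha}$ as $\theta\to0$, not from any $\ell_2$ accumulation across scales. Your proposed ``robust width functional $W_k$'' and multi-scale chaining have no analogue in the paper, and you give no mechanism by which a fixed point would control all the $W_k$ simultaneously with the right $\alpha$-dependence.

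Second, the fixed point cannot be obtained from Brouwer on a ``compact convex set of normalized positions'' as you suggest. The natural domain for the iteration $T\mapsto F(T)$ (where $F(T)$ is the positive-definite, determinant-one map putting $[K_0,T^{-1}(K_1)]_\theta$ in $\ell$-position) is a compact subset of $\SPD_n$, which is \emph{not} convex because of the determinant constraint; this is exactly why the paper has to develop Lemma~\ref{lem:retract} (compact, contractible, strictly locally contractible $\Rightarrow$ fixed-point property via Borsuk's AR/ANR theory) and verify contractibility using the path $S_t=T^{t/2}S^{1-t}T^{t/2}$. Dropping the determinant constraint to regain convexity reintroduces a scaling degeneracy, and the ``take $W_k$ scale-invariant'' fix you mention would make the self-map itself scale-invariant, so no canonical fixed point is singled out.

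Finally, you omit the real-to-complex reduction entirely, but it is essential and nontrivial here: Pisier's Rademacher estimate for interpolants lives over $\C$, and to get a typical-subspace statement over $\R$ one needs the complexification $K^{\C}$ to be $\bar Z$-invariant \emph{and} the fixed-point map $T$ to commute with $\bar Z$ (hence to be real), so that $T(K^{\C})\cap E_{\Re}=T(K)$ \emph{and} $T(K^{\C})^{\circ}\cap E_{\Re}=T(K)^{\circ}$; this is precisely what Proposition~\ref{prop:new-pos} is engineered to deliver via the symmetry clause of Lemma~\ref{lem:l-sym}. Without this, the random real section of $T(K^{\C})$ does not project to a random section of $T(K)$. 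In short: the fixed-point strategy is the right idea, but the working engine — complex interpolation with $B_2^n$, the $\ell$-position of the interpolant rather than of $K$, Pisier's $\Phi(\theta)$ bound, the retract-based fixed-point theorem on a non-convex $\Omega\subset\SPD_n$, and the $\bar Z$-equivariance for the real case — is absent from your proposal.
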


\noindent
To facilitate the ensuing discussion, we call $\tilde K_\alpha$ satisfying (\ref{eq:P-pos}) an \emph{$\alpha$-regular $P$-position} (with constant $P_\alpha$), and $\bar K_\alpha$ satisfying (\ref{eq:random-P-pos}) an \emph{$\alpha$-regular $P$-typical-position} (with constants $\bar P_\alpha, c$).

\smallskip

The main novelty in Theorem \ref{thm:main1} lies in the fact that $\bar P_\alpha$ satisfies (\ref{eq:P-alpha}) and not worse as $\alpha \rightarrow 1/2$. Indeed, as we shall presently explain, by employing various additional results from the literature, it is possible to deduce Theorem \ref{thm:main1} directly from (\ref{eq:regular-M-pos}) or from (\ref{eq:P-pos}) with $\bar K_\alpha = \tilde K_{\alpha}$ and a worse dependence, ensuring only: 
\begin{equation} \label{eq:bad-P-alpha}
  \bar P_\alpha \leq \frac{C}{(\alpha - 1/2)^{\frac{3}{2}}} \text{ as $\alpha \rightarrow 1/2$.}
\end{equation}
The main novelty of the proof consists of using a generalized fixed-point theorem on topological retracts to find a linear map $T_\alpha \in \GL_n$ so that (being slightly inaccurate) the complex interpolation of $\bar K_\alpha = T_\alpha(K)$ with $B_2^n$ is in the $\ell$-position 
-- see Proposition \ref{prop:new-pos}. Topological arguments have been previously used to find good positions using \emph{diagonal} maps $T$ (e.g. \cite[Lemma 3.1]{PaourisValettas-DeviationInNewPosition}), but we are not aware of any instance when a fully general map $T \in \GL_n$ has been specified using a topological argument. We also remark that when $K$ is the unit-ball of a normed space over the complex field $\C$, an analogous statement holds for typical $\C$-linear subspaces -- see the proof of Theorem \ref{thm:main1} in Section \ref{sec:main1}. 

\subsection{Relation to the $M$- and $P$-positions} \label{subsec:relation}

By the preceding remarks, we have the following sequence of implications:
\begin{multline*}
\text{$K$ is in $\alpha$-regular $P$-typical-position} \Rightarrow \\
\text{$K$ is in $\alpha$-regular $P$-position} \Rightarrow \\
\text{$K$ is in $\alpha$-regular $M$-position} .
\end{multline*}

It turns out that each of the above implications may be reversed, but at the price of increasing the constant $P_\alpha$. Following S.~Litvak and N.~Tomczak-Jaegermann \cite{LT-RandomGelfand}, it will be convenient to introduce the random Gelfand numbers $cr_k(K)$, defined for $k=1,\ldots,n$ as:
\[
cr_k(K) := \inf \set{ R > 0 \; ; \;  \sigma_{n,n-k+1}\{ F \in G_{n,n-k+1} \; ; \; \frac{1}{2} \diam(K \cap F) >  R \} \leq \exp(-c k) }  ,
\]
for an appropriate predetermined constant $c > 0$. By \cite[Lemma 3.3]{LT-RandomGelfand}, there exists $\rho > 0$ so that:
\[
\rho = cr_k(K \cap \rho B_2^n) = cr_k(K) .
\]
Recalling the argument from the proof of \cite[Theorem 3.2]{LT-RandomGelfand}, we employ the optimal form of Milman's low-$M^*$-estimate due to Pajor--Tomczak-Jaegermann \cite{PajorTomczakLowMStar} (cf. \cite[Theorem 7.3.1]{AGA-Book-I}), Dudley's entropy estimate \cite[Theorem 5.5]{Pisier-Book} (see e.g. the proof of \cite[Proposition 3.3]{EMilman-Duality-of-Entropy} for why it is enough to stop the summation at $j=n$), and Carl's theorem \cite[Theorem 5.2]{Pisier-Book}, to deduce that for any $k,m=1,\ldots,n$ and $s \in \{e,c\}$:
\[
\sqrt{k} \rho = \sqrt{k} \; cr_k(K \cap \rho B_2^n)  \leq C \ell^*(K \cap \rho B_2^n) \leq C' \sum_{j=1}^n \frac{s_j(K \cap \rho B_2^n)}{\sqrt{j}}  \leq C' \brac{\sum_{j=1}^m \frac{\rho}{\sqrt{j}} + \sum_{j=m+1}^n \frac{s_j(K)}{\sqrt{j}}}
\]
(see Section \ref{sec:prelim} for the definition of the Gaussian-mean dual-norm $\ell^*(L)$). Setting $m = \lfloor c k \rfloor$ for an appropriate $c > 0$, it follows that for $s \in \{e,c\}$:
\[
\sqrt{k} \; cr_k(K) = \sqrt{k} \rho \leq C'' \sum_{j = \lceil c k \rceil}^{n}  \frac{s_j(K)}{\sqrt{j}} .
\]
Consequently, if $K$ is either in $\alpha$-regular $P$-position or $\alpha$-regular $M$-position with constant $P_\alpha$, one readily checks that:
\[
cr_k(K) \leq \begin{cases} C 2^\alpha \frac{P_\alpha}{\alpha-1/2} \brac{\frac{n}{k}}^{\alpha} & \alpha > 1/2 \\ C P_{1/2} \log(1 + \frac{n}{k}) \sqrt{\frac{n}{k}}   & \alpha = 1/2 \end{cases} 
\]
(the case $\alpha=1/2$ above was obtained by V.~Milman in \cite[Theorem 3]{Milman-NoteOnLowMStar}).
In other words, $K$ is in $\alpha$-regular $P$-typical-position with constant $C 2^{\alpha} \frac{P_\alpha}{\alpha-1/2}$ whenever $\alpha > 1/2$. Recalling  (\ref{eq:P-alpha}), this establishes Theorem \ref{thm:main1} with $\bar K_\alpha = \tilde K_\alpha$ and the worse estimate (\ref{eq:bad-P-alpha}), so we will need a different approach to obtain (\ref{eq:P-alpha}).

For completeness, we also mention the high-dimensional paradigm that ``existence implies abundance", which in our context dictates that the existence of a single section of small diameter implies that a random section (of slightly higher codimension) also has a controlled diameter (see \cite{Vershynin-LocalVsGlobal, GMT-diameter, LPTJ-LocalVsGlobal}). However, even with the best known estimates, this would result in an additional $(n/k)^{1/2+\eps}$ factor in the diameter upper bound when transitioning from single to random section, which is much worse than the approach above. This is expected, as the former approach utilized the regularity of the entire sequence $\{e_k\}$ or $\{c_k\}$, not just a single value of $k$ as in the latter one. 

\subsection{Random Quotient-of-Subspace Theorem}

Our motivation for establishing Theorem \ref{thm:main1} is that it naturally leads to a random version of Milman's Quotient-of-Subspace (QS) Theorem \cite{Milman-QOS}. In \cite{Milman-QOS}, V.~Milman showed that for every normed space $X = (\R^n,\norm{\cdot})$ and any $k=1,\ldots,n$, there exists $E$, a quotient of a subspace of $X$, having dimension $m = n-k+1$, so that:
\[
d_{BM}(E,\ell_2^m) \leq \exp\brac{\exp\brac{C \frac{n}{k}}} .
\]
Here $d_{BM}(X,Y)$ denotes the Banach-Mazur distance between two normed spaces $X,Y$, defined as the infimum of $\snorm{T}_{op} \snorm{T^{-1}}_{op}$ over all linear isomorphisms $T : X \rightarrow Y$. Note that we do not distinguish between quotients of subspaces and subspaces of quotients as it is easy to check that these two classes coincide (see e.g. Lemma \ref{lem:perp}). Incorporating the optimal form of Milman's low-$M^*$-estimate due to Pajor--Tomczak-Jaegermann \cite{PajorTomczakLowMStar} into Milman's subsequent QS  argument from \cite{Milman-QOS-Better-low-MStar} (see \cite[Theorem 8.4]{Pisier-Book} or \cite[Theorem 7.4.1]{AGA-Book-I}), the following improved estimate holds:
\[
d_{BM}(E,\ell_2^m) \leq C \frac{n}{k} \log\brac{1 + \frac{n}{k}} .
\]
In geometric terms, the statement is equivalent to finding, for any $k=1,\ldots,n$, a position $\tilde K_k$ of an origin-symmetric convex body $K \subset (\R^n,\scalar{\cdot,\cdot})$ and subspaces $E \subset F \subset \R^n$ with $E$ of dimension $n-k+1$, so that:
\begin{equation} \label{eq:intro-QOS}
d_{G}((P_F \tilde K_k) \cap E , B_2^E) \leq C \frac{n}{k} \log\brac{1 + \frac{n}{k}} ,
\end{equation}
where $P_F$ denotes orthogonal projection onto $F$, $B_2^E$ denotes the Euclidean unit-ball in $E$, and $d_{G}(K,L)$ denotes the geometric distance between two origin-symmetric convex bodies $K,L$ in their linear hull, defined as $\inf \{ a b > 0 \; ; \; \frac{1}{b} L \subset K \subset a L\}$. To the best of our knowledge, (\ref{eq:intro-QOS}) is presently the best-known estimate in the QS Theorem. 

The original proofs in \cite{Milman-QOS,Milman-QOS-Better-low-MStar} involved a complicated iteration scheme, later simplified by Pisier (see \cite[Chapter 8]{Pisier-Book}), putting $K$ in a different position at each step of the iteration. As a result, the statement of the QS Theorem is deterministic, and only guarantees the \emph{existence} of the subspaces $E \subset F \subset \R^n$. Subsequent proofs of the QS Theorem, which guarantee that randomly selected $E \subset F \subset \R^n$ do the job with high-probability, yield estimates much worse than (\ref{eq:intro-QOS}). For instance (following the proof of \cite[Corollary 7.9]{Pisier-Book}), by putting $K$ in an $M$-position $\tilde K$, the orthogonal projection $P_F \tilde K$ onto a random subspace $F$ of dimension $n-\lceil k /2 \rceil +1$ is a finite-volume-ratio body with high-probability \cite[Theorem 8.6.1]{AGA-Book-I}, and thus by a theorem of Szarek and Tomczak-Jaegermann \cite[Theorem 5.5.3]{AGA-Book-I}, a random section $(P_F \tilde K) \cap E$ of dimension $n-k+1$ satisfies with high-probability:
\begin{equation} \label{eq:intro-random-QOS}
d_{G}((P_F \tilde K) \cap E , B_2^E) \leq e^{ C \brac{\frac{n}{k}}^2 } .
\end{equation}
While this may be satisfactory when $k$ is proportional to $n$, when $k \ll n$ the difference with (\ref{eq:intro-QOS}) is super exponential in $n/k$.
To the best of our knowledge, there were no prior results yielding (\ref{eq:intro-QOS}) with high-probability (even with a worse polylogarithmic term). Our second main result, which served as the impetus to Theorem \ref{thm:main1} and is a simple consequence thereof, closes this gap. For convenience, we replace $k$ by $2k$ in our formulation.

\begin{theorem} \label{thm:main2}
For every origin-symmetric convex body $K$ in $\R^n$ and for every integer $k \in [1,n/2]$, there exists a position $\bar K_k$ of $K$ so that:
\begin{align*}
\P\brac{d_{G}((P_F \bar K_k) \cap E , B_2^E)  \leq C \frac{n}{k} \log\brac{\frac{n}{k}}} \geq 1 - 2 \exp(-c k) , \\
\P\brac{d_{G}(P_E( \bar K_k \cap F) , B_2^E)  \leq C \frac{n}{k} \log\brac{\frac{n}{k}}} \geq  1 - 2 \exp(-c k) ,
\end{align*}
where $(F,E)$ is selected uniformly at random from the flag manifold $G^k = \{ (F,E) \; ; \; \R^n \supset F \supset E ~,~ F \in G_{n,n-k+1} ~,~ E \in G_{n,n-2k+2} \}$, 
namely $F$ is selected uniformly in $G_{n,n-k+1}$ and, given $F$, $E$ is selected uniformly in $G_{F,n-2k+2} = \{ E \in G_{n,n-2k+2} \; ;\; E \subset F\}$ (according to the corresponding Haar measures). \\
Moreover, this holds in an $\alpha$-regular $P$-typical-position $\bar K_k = \bar K_\alpha$ for $\alpha = \frac{1}{2} + \frac{1}{\log(n/k)}$, and we have:
\begin{equation} \label{eq:obviously}
\P\brac{ \diam(L \cap F) \leq C' \sqrt{\frac{n}{k} \log\brac{\frac{n}{k}}} \;} \geq 1 - \exp(- c k) \;\;\; \forall L = \bar K_k , \bar K_k^\circ . 
\end{equation}
\end{theorem}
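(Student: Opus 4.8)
The plan is to take for $\bar K_k$ the $\alpha$-regular $P$-typical-position $\bar K_\alpha$ supplied by Theorem \ref{thm:main1} with the exponent $\alpha := \frac{1}{2} + \frac{1}{\log(n/k)}$, and then read off both assertions directly from (\ref{eq:random-P-pos}). First I would record the arithmetic behind this choice: since $1 \le k \le n/2$ we have $\log(n/k) \ge \log 2 > 0$, so by (\ref{eq:P-alpha}) (together with the fact that $\bar P_\alpha$ is bounded when $\alpha$ stays away from $1/2$) one gets $\bar P_\alpha \le C\sqrt{\log(n/k)}$, while $(n/k)^{\alpha} = \sqrt{n/k}\cdot (n/k)^{1/\log(n/k)} = e\,\sqrt{n/k}$; hence the quantity
\[
\rho := \bar P_\alpha\,(n/k)^{\alpha} \le C'\sqrt{(n/k)\log(n/k)}
\]
controls everything. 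In particular (\ref{eq:obviously}) is just (\ref{eq:random-P-pos}) of Theorem \ref{thm:main1} at codimension $k-1$, applied to $L=\bar K_\alpha$ and to $L=\bar K_\alpha^{\circ}$: each gives $\diam(L\cap F)\le 2\rho \le C''\sqrt{(n/k)\log(n/k)}$ outside a set of $\sigma_{n,n-k+1}$-measure $\exp(-ck)$.

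The crux is a deterministic identity. Given a flag $(F,E)\in G^k$, set $G := E\oplus F^{\perp}$ (a genuine orthogonal direct sum, as $E\subseteq F$), so $\dim G = (n-2k+2)+(k-1) = n-k+1$. Then for every origin-symmetric convex body $L\subset\R^n$,
\[
(P_F L)\cap E = P_E(L\cap G) .
\]
Indeed, for $x\in E\subseteq F$ one has $x\in P_F L$ iff $(x+F^{\perp})\cap L\neq\emptyset$; since $x+F^{\perp}\subseteq G$ this is equivalent to $(x+F^{\perp})\cap(L\cap G)\neq\emptyset$, and as $F^{\perp}$ is the fibre of $P_E|_G$ (again using $E\subseteq F$) the latter says exactly $x\in P_E(L\cap G)$. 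The essential observation is now that $(F,E)\mapsto(F^{\perp},G)$ is an $O(n)$-equivariant bijection from $G^k$ onto the flag manifold of pairs $(U,G)$ with $U\subseteq G$, $\dim U=k-1$, $\dim G=n-k+1$ (its inverse being $(U,G)\mapsto(U^{\perp},G\cap U^{\perp})$); hence, when $(F,E)$ is Haar-distributed on $G^k$, the subspace $G=E\oplus F^{\perp}$ is itself Haar-distributed on $G_{n,n-k+1}$, notwithstanding that it always contains the random subspace $F^{\perp}$.

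Granting these two facts, the rest is routine polar duality. Consider the events $\mathcal{E}_1 = \set{\diam(\bar K_\alpha^{\circ}\cap F)\le 2\rho}$ and $\mathcal{E}_2 = \set{\diam(\bar K_\alpha\cap G)\le 2\rho}$. Since $F$ and $G$ are both Haar in $G_{n,n-k+1}$, Theorem \ref{thm:main1} at codimension $k-1$ (applied to $\bar K_\alpha^{\circ}$ and to $\bar K_\alpha$, respectively) gives $\P(\mathcal{E}_i)\ge 1-\exp(-ck)$. On $\mathcal{E}_1$, polarity of sections and projections (Lemma \ref{lem:perp}) gives $P_F\bar K_\alpha = (\bar K_\alpha^{\circ}\cap F)^{\circ}\supseteq\frac{1}{\rho}B_2^F$ (polar within $F$), whence $(P_F\bar K_\alpha)\cap E\supseteq\frac{1}{\rho}B_2^E$; on $\mathcal{E}_2$, the identity above yields $(P_F\bar K_\alpha)\cap E = P_E(\bar K_\alpha\cap G)\subseteq\rho\,B_2^E$. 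Thus on $\mathcal{E}_1\cap\mathcal{E}_2$ one has $\frac{1}{\rho}B_2^E\subseteq(P_F\bar K_\alpha)\cap E\subseteq\rho\,B_2^E$, so $d_{G}((P_F\bar K_\alpha)\cap E,B_2^E)\le\rho^2\le C\,\frac{n}{k}\log\frac{n}{k}$, which is the first assertion. The second is obtained verbatim after interchanging $\bar K_\alpha$ and $\bar K_\alpha^{\circ}$ throughout, using $(P_E(\bar K_\alpha\cap F))^{\circ} = (P_F\bar K_\alpha^{\circ})\cap E$ (polar within $E$) and $d_{G}(Q,B_2^E) = d_{G}(Q^{\circ},B_2^E)$, with the events $\set{\diam(\bar K_\alpha\cap F)\le 2\rho}$ and $\set{\diam(\bar K_\alpha^{\circ}\cap G)\le 2\rho}$.

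I do not expect a genuine obstacle: this is a short deduction from Theorem \ref{thm:main1}. The two points that need care are (i) verifying that $G=E\oplus F^{\perp}$ is honestly Haar-distributed on $G_{n,n-k+1}$, i.e.\ that conditioning on the choice of $F$ does not bias it --- which is precisely the $O(n)$-equivariance of $(F,E)\mapsto(F^{\perp},G)$ --- and (ii) the bookkeeping that, for the chosen $\alpha$, the bound $\bar P_\alpha(n/k)^{\alpha}$ collapses to $O\!\big(\sqrt{(n/k)\log(n/k)}\big)$ uniformly over $1\le k\le n/2$, so that $\rho^2 = O((n/k)\log(n/k))$. (Over the complex field the same argument runs through with $\C$-linear flags.)
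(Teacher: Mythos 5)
Your proposal is correct and follows essentially the same route as the paper. The paper first proves a uniform-in-$\alpha$ version (Theorem \ref{thm:uniform-QOS}) and then specializes $\alpha = \frac12 + \frac{1}{\log(n/k)}$; internally it uses exactly your auxiliary subspace $E_2 = F^\perp + E$ (your $G$), your observation that $U(F^\perp+E)=U(F)^\perp+U(E)$ forces $E_2$ to be Haar on $G_{n,n-k+1}$ by uniqueness of Haar measure, and the same combination of Lemma \ref{lem:perp}, polar duality of sections and projections, and a union bound. Your ``deterministic identity'' $(P_F L)\cap E = P_E(L\cap G)$ is precisely the content of Lemma \ref{lem:perp} with $E_1=G$, $E_2=F$ (using $G\supset F^\perp$ and $G\cap F=E$). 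One small correction: your parenthetical citation of Lemma \ref{lem:perp} for the polarity fact $P_F \bar K_\alpha=(\bar K_\alpha^\circ\cap F)^{\circ_F}$ is misattributed --- that lemma is the projection/intersection commutation identity, not the polar duality of sections and projections (which is a separate standard fact); this doesn't affect correctness. The only organizational difference is in deriving the second assertion: you apply the first to $\bar K_\alpha^\circ$ and then take polars within $E$, whereas the paper simply swaps the roles of $E_1$ and $E_2$ in the inclusion argument, avoiding a second polar duality step. Both are valid and yield identical estimates and probabilities.
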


We refer to Section \ref{sec:QOS} for more on the natural Haar measure on $G^k$. It is worthwhile to note the tradeoff between Theorem \ref{thm:main2} and the previous random versions of the QS Theorem: while the almost linear estimate $C \frac{n}{k} \log(\frac{n}{k})$ is attained with probability $1 - 2 \exp(-ck)$ in a position $\bar K_k$ which varies with $k$, the much worse super-exponential estimate in (\ref{eq:intro-random-QOS}) is achieved with probability $1 - \exp(-c n)$ in a position $\tilde K$ which works \emph{uniformly} for all $k$. A version of Theorem \ref{thm:main2} which holds uniformly in $k$, but yields the polynomial estimate $C (2 \alpha-1)^{-1} (\frac{n}{k})^{2 \alpha}$ with probability $1 - 2 \exp(-ck)$ in the $\alpha$-regular $P$-typical position $\bar K_\alpha$ for any given $\alpha > 1/2$, is stated in Theorem \ref{thm:uniform-QOS}; setting $\alpha = \frac{1}{2} + \frac{1}{\log(n/k)}$ yields the above optimized version.

\medskip

A final remark before concluding this Introduction. In view of F.~John's Theorem \cite[Chapter 2.1]{AGA-Book-I}, which implies that $d_{BM}(X,\ell_2^n) \leq \sqrt{n}$ for any $n$-dimensional normed space $X$ over $\R$, it is a very interesting question whether $\frac{n}{k}$ in any of the above results could be replaced by $\sqrt{\frac{n}{k}}$, as the latter is  the correct order of magnitude when $k$ is of the order of a constant. 

\bigskip

The rest of this work is organized as follows. In Section \ref{sec:prelim} we introduce some notation and provide the relevant background from Asymptotic Geometric Analysis. In Section \ref{sec:fixedpoint}, we prove the existence of a new position using topological methods. The proofs of Theorems \ref{thm:main1} and \ref{thm:main2} are provided in Sections \ref{sec:main1} and \ref{sec:QOS}, respectively.

\medskip
\noindent
\textbf{Acknowledgment.} We thank Bo'az Klartag for helpful discussions, and the referee for useful comments which have improved our presentation.

\section{Preliminaries} \label{sec:prelim}

In several places in this work we will need to work over the complex field $\C$, and so we start by carefully describing our setup over the field $\F$, where $\F$ is either $\R$ or $\C$. 
By considering the unit-ball of a given norm on $\F^n$, one obtains a well-known one-to-one correspondence between norms and origin-symmetric convex bodies in the case of $\R^n$, or circled convex bodies in the case of $\C^n$. 
Recall that a convex body $K$ in $\R^n$ is called origin-symmetric if $K = -K$, and a convex body in  $\C^n$ is called circled if $e^{i\theta} K = K$ for all $\theta \in [0,2\pi]$. We denote the norm whose unit-ball is the convex body $K$ by $\norm{\cdot}_K$ and the corresponding normed space by $X_K$. We equip $\F^n$ with the standard scalar-product $\scalar{\cdot,\cdot} = \scalar{\cdot,\cdot}_{\F}$ over $\F$, and define the polar body $K^{\circ}$ as the unit-ball of the dual-norm $\norm{\cdot}_K^*$ on $\F^n$, namely $K^{\circ} := \set{ y \in \F^n \; ; \; \abs{\scalar{x,y}_{\F}} \leq 1 \; \forall x \in K}$. We denote the Euclidean norm by $|x| = |x|_{\F} = \sqrt{\scalar{x,x}_{\F}}$. 

We will often identify $\C^n$ with $\R^{2n}$ by writing:
\[
\C^n = \{ x + i y \; ; \; x , y \in \R^n \} ,
\]
and modify our notation accordingly. To this end, it will be convenient to denote:
\[
N = \begin{cases} n & \F = \R \\ 2n & \F = \C \end{cases} ,
\]
so that $\F^n \simeq \R^N$. Note that if $u,v \in \C^n \simeq \R^{2n}$, then $\scalar{u,v}_{\R} = \text{Re} \scalar{u,v}_{\C}$; in particular, the Euclidean norm $|\cdot|_{\R}$ on $\R^{2n}$ agrees with the Euclidean norm $|\cdot|_{\C}$ on $\C^n$, and so there is no need to specify which one we employ. The Euclidean unit-sphere in $\F^n$ is denoted by $S^{N-1}$, and its associated Haar probability measure is denoted by $\sigma_N$.

Let $B_2^n = B_2^n(\F)$ denote the Euclidean unit-ball in $\F^n$. Given an $\R$-linear subspace $E$, we denote by $B_2^E$ the Euclidean unit-ball in $E$. 
If $K$ is a convex body in $E$, we denote by $r(K),R(K)$ the in-radius and out-radius of $K$, namely the best constants $r, R> 0$ so that:
\[
r B_2^E \subset K \subset R B_2^E  \;\;\;  (\text{equivalently, } \frac{1}{R} |x| \leq \norm{x}_K \leq \frac{1}{r} |x| \;\; \;\forall x \in E ) .
\]
The geometric distance $d_G(K,L)$ between two origin-symmetric convex bodies $K,L \subset E$ is defined as $\inf \{a b > 0\; ; \; \frac{1}{b} L \subset K \subset a L \}$.

The Grassmannian of all $m$-dimensional $\F$-linear subspaces of $\F^n$ ($0 \leq m \leq n$) is denoted by $G_{n,m}(\F)$; when $\F$ is clear from the context we will simply write $G_{n,m}$. We will denote the Grassmannian of all $m$-dimensional $\R$-linear subspaces of $\C^n$ ($0 \leq m \leq 2n$) by $G_{2n,m}(\R)$. $G_{n,m}(\F)$ is a homogeneous space of the orthogonal group $O(n)$ when $\F=\R$ and of the unitary group $U(n)$ when $\F= \C$, and so it is endowed with a natural (unique) Haar probability measure, which we denote by $\sigma_{n,m} = \sigma_{n,m}(\F)$. 

We denote by $\M_n(\F)$ the space of linear maps on $\F^n$. We denote the group of all $\F$-linear invertible maps on $\F^n$ by $\GL_n(\F)$. 
The group of $\R$-linear maps on $\C^n$ is denoted by $\GL_{2n}(\R)$. 
An invertible $\F$-linear image of a convex body $K$ in $\F^n$ is called a position of $K$. 
Note that $T(K)^{\circ} = T^{-*}(K^{\circ})$ for all $T \in \GL_n(\F)$. 

\subsection{$\ell$-norm and $\ell$-position} 

Let $G_N$ denote a standard Gaussian random vector on $\F^n$, namely $G_N = \sum_{i=1}^{N} g_i \xi_i$ where $\{\xi_i\}_{i=1}^N$ is an orthonormal basis of $(\F^n,\scalar{\cdot,\cdot}_\R)$ and $\{g_i\}_{i=1}^N$ are independent standard Gaussian random variables; its Gaussian distribution is denoted by $\gamma_N$. Given a normed space $X_K = (\F^n,\norm{\cdot}_K)$ and $p \in \{1,2\}$, we denote:
\[
\ell_p(K) := (\E \norm{G_N}_K^p)^{1/p} ~,~ \ell^*_p(K) = \ell_p(K^{\circ}) = (\E (\norm{G_N}^*_K)^p)^{1/p} ;
\]
when $p=1$, we simply write $\ell(K)$ and $\ell^*(K)$, respectively. Consider the minimization problem:
\begin{equation} \label{eq:l-pos-min}
\min \{\ell_2(T(K)) \; ; \; T \in \GL_n(\F)  \; ,\; \det T = 1 \} . 
\end{equation}
By e.g. (\ref{eq:al*}) below we can restrict the minimization to $\snorm{T^{-1}}_{op} \leq \sqrt{\pi/2} R(K) \ell_2(K)$ (where $\snorm{T^{-1}}_{op}$ refers to the operator norm of $T^{-1} : \ell_2^n(\F) \rightarrow \ell_2^n(\F)$). 
Compactness then implies that a minimizer $\tilde K = T(K)$ exists and is called an $\ell$-position of $K$; $K$ is said to be in $\ell$-position if the minimum above is attained on $T = \Id$. Denote by $\PD_n(\F)$ the convex cone in $\GL_n(\F)$ of positive-definite maps, and by $\SPD_n(\F)$ the subset of $\PD_n(\F)$ with determinant $1$. By invariance of $\gamma_N$ under the orthogonal group $O(N)$ and polar decomposition, the $\ell$-position may always be realized by a map $T$ in $\SPD_n(\F)$; moreover, this representative in $\SPD_n(\F)$ is necessarily unique (see the proof of Lemma \ref{lem:l-sym} below).

It was shown by Figiel and Tomczak-Jaegermann \cite{l-position} (cf. \cite[Theorem 6.4.5]{AGA-Book-I}) that if $K$ is in the $\ell$-position then:
\begin{equation} \label{eq:l-position}
\ell_2(K) \ell^*_2(K) \leq N \norm{Rad(X_K)}_{op} ,
\end{equation}
where $Rad(X_K) : L^2(\F^n,\gamma_{N}) \otimes X_K \rightarrow L^2(\F^n,\gamma_{N}) \otimes X_K$ is the (Gaussian) Rademacher projection and $\norm{Rad(X_K)}_{op}$ is its operator norm (also called the $K$-convexity constant of $X_K$). We refer to \cite[Chapter 2]{Pisier-Book} or \cite[Chapter 6]{AGA-Book-I} for precise definitions, as those will not be required here. At this point we only mention the following fundamental estimate of Pisier  \cite{Pisier-RadLogN} (cf. \cite[Theorem 2.5]{Pisier-Book},\cite[Appendix]{Bourgain-Milman-vr-and-reverse-santalo}): 
\begin{equation} \label{eq:Pisier-Rad}
\norm{Rad(X_K)}_{op} \leq C \log(1+n) .
\end{equation}
Combining (\ref{eq:l-position}) with (\ref{eq:Pisier-Rad}), we have (by Jensen's inequality) for any $K \subset \F^n$ in the $\ell$-position:
\begin{equation} \label{eq:ll*}
\ell(K) \ell^*(K) \leq C' n \log(1+n) .
\end{equation}

A point we do want to emphasize is that when $\F = \C$, the minimization in (\ref{eq:l-pos-min}) which guarantees (\ref{eq:l-position}) may indeed be taken only over $\GL_n(\C)$. 
Strictly speaking, since $\{\xi_i\}_{i=1}^{2n}$ is an orthonormal basis over $\R$, we do not know how to apply the Figiel--Tomczak-Jaegermann argument to the minimization problem (\ref{eq:l-pos-min}) over $\GL_n(\C)$, only over $\GL_{2n}(\R)$, 
and we can a-priori only conclude that the $\ell$-position which guarantees (\ref{eq:l-position}) is realized by a map $T \in \GL_{2n}(\R)$. However, as we've learned from Bo'az Klartag, the proper way to justify that things remain valid over $\C$ (when $K$ is a circled convex body) is to \emph{a-posteriori} deduce that $T \in \GL_n(\C)$. Since we will anyway require a more general version of this argument for later use, we formulate it as follows:

\begin{lemma} \label{lem:l-sym}
Let $K$ denote an (origin-symmetric) convex body in $\R^{N}$, and let $\G$ denote a subgroup of symmetries of $K$ in $O(N)$, namely $g K = K$ for all $g \in \G < O(N)$. 
Consider the minimization problem:
\begin{equation} \label{eq:l-pos-min2}
\min \{\ell_2(T(K)) \; ; \; T \in \GL_{N}(\R)  \; ,\; \det T = 1 \} . 
\end{equation}
Then there exists a unique minimizer $P$ of (\ref{eq:l-pos-min2}) in $\SPD_N(\R)$, and $P$ commutes with the elements of $\G$, namely $g P = P g$ for all $g \in \G$. 

In particular, if $K$ is a circled convex body in $\C^n \simeq \R^{N=2n}$, there exists $T \in \GL_n(\C)$ minimizing (\ref{eq:l-pos-min2}) and it is unique over $\SPD_n(\C)$. 
\end{lemma}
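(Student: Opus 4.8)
The plan is to prove the uniqueness and the commutation property together, exploiting strict convexity of the objective functional along geodesics in the space of positive-definite matrices. First I would reduce the minimization over all $T \in \GL_N(\R)$ with $\det T = 1$ to the minimization over $\SPD_N(\R)$: by polar decomposition write $T = U P$ with $U \in O(N)$ and $P \in \PD_N(\R)$, note $\det P = |\det T| = 1$ so $P \in \SPD_N(\R)$, and observe that $\ell_2(T(K))^2 = \E|U P \, G_N|$-type quantities are unchanged if we drop $U$ — more precisely, $\norm{\cdot}_{T(K)} = \norm{\cdot}_{UP(K)}$ and by rotation-invariance of $\gamma_N$, $\ell_2(UP(K)) = \ell_2(P(K))$. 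Hence a minimizer exists in $\SPD_N(\R)$ (existence following from the compactness argument already sketched in the text via the operator-norm bound).

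The key step is uniqueness. I would parametrize $\SPD_N(\R)$ by $P = e^{A}$ with $A$ symmetric and $\tr A = 0$, and consider the function $A \mapsto \Phi(A) := \ell_2(e^{A}(K))^2 = \E \norm{e^{-A} G_N}_{K^\circ}^2$ — wait, more cleanly: $\ell_2(T(K))^2 = \E \norm{G_N}_{T(K)}^2 = \E \norm{T^{-1} G_N}_K^2$. Writing $T = e^A$, this is $\E\norm{e^{-A}G_N}_K^2$. Along a line segment $A_t = (1-t)A_0 + t A_1$ in the space of trace-zero symmetric matrices, I claim $t \mapsto \Phi(A_t)$ is strictly convex unless $A_0 = A_1$: indeed $x \mapsto \norm{e^{-A_t} x}_K^2$ is a log-convex-type combination, and one shows $t \mapsto \norm{e^{-A_t}x}_K$ is convex (since $e^{-A_t} = e^{-(1-t)A_0 - tA_1}$ and operator exponentials interact with the norm via the Golden–Thompson / trace-exponential convexity, or more elementarily since $s \mapsto \norm{e^{sB} x}$ is log-convex for symmetric $B$ by Cauchy–Schwarz applied in the spectral decomposition), hence its square is convex, and strict convexity of $t^2$ forces the minimizer to be unique. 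This is the standard Figiel–Tomczak-Jaegermann style argument and I would cite \cite{l-position} or \cite[Chapter 6.4]{AGA-Book-I} for the precise convexity computation. The main obstacle here is making the strict convexity rigorous: one must check the second derivative of $t \mapsto \norm{e^{-A_t}x}^2$ is strictly positive for $\gamma_N$-a.e.\ $x$ whenever $A_0 \neq A_1$, which requires that $K$ is a genuine body (bounded with nonempty interior) so the norm is strictly positive and finite, and that the set of $x$ where degeneracy could occur has measure zero.

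For the commutation property, I would use the uniqueness together with the $\G$-invariance of $K$. Fix $g \in \G < O(N)$. Since $gK = K$ and $\gamma_N$ is $O(N)$-invariant, for any $T$ with $\det T = 1$ we have $\ell_2((g T g^{-1})(K)) = \ell_2(g T g^{-1} g(K)) = \ell_2(g T(K)) = \ell_2(T(K))$, using $g^{-1}(K) = K$ and the rotation-invariance of the Gaussian norm-average. Also $g T g^{-1} \in \SPD_N(\R)$ whenever $T \in \SPD_N(\R)$ (conjugation by an orthogonal matrix preserves positive-definiteness, symmetry, and determinant). Therefore if $P$ is the unique $\SPD_N(\R)$-minimizer, so is $g P g^{-1}$, and uniqueness forces $g P g^{-1} = P$, i.e.\ $g P = P g$ for all $g \in \G$.

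Finally, for the "in particular" statement: when $K \subset \C^n \simeq \R^{2n}$ is circled, the group $\G$ contains the copy of the unit circle $\{ R_\theta : \theta \in [0,2\pi]\} < O(2n)$ acting as multiplication by $e^{i\theta}$. A real matrix $P$ commuting with all $R_\theta$ is exactly one that is $\C$-linear, i.e.\ $P \in \M_n(\C)$ viewed inside $\M_{2n}(\R)$; being additionally symmetric positive-definite over $\R$ with $\det_\R = 1$ makes it Hermitian positive-definite over $\C$ (an element of $\SPD_n(\C)$ in the notation of the text, once one checks the determinant normalization transfers correctly — $\det_\R(P) = |\det_\C(P)|^2$, so $\det_\C P$ has modulus one, and positive-definiteness forces it real and positive, hence $1$). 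Thus the minimizer $T = P$ lies in $\GL_n(\C)$, and its uniqueness over $\SPD_N(\R)$ a fortiori gives uniqueness over the smaller set $\SPD_n(\C)$. I expect the commutation argument and this last identification to be routine; the genuine work is the strict-convexity proof of uniqueness, which is where I would spend most of the effort (or simply invoke \cite{l-position}).
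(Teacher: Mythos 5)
Your overall structure matches the paper exactly: reduce to $\SPD_N(\R)$ via polar decomposition and rotation-invariance of $\gamma_N$, deduce the commutation property from uniqueness by observing that $gPg^{-1}$ is another $\SPD_N(\R)$-minimizer, and specialize to $g$ acting as multiplication by $i$ to get $\C$-linearity (the paper uses only the single element $g=i$ rather than the full circle, but this is immaterial since commuting with $i$ already characterizes $\M_n(\C)$ inside $\M_{2n}(\R)$). Where you genuinely diverge is the uniqueness argument, and that divergence is where your proposal has a real gap.

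You parametrize $P = e^A$ with $A$ trace-zero symmetric and claim that $t \mapsto \norm{e^{-A_t}x}_K$ is convex along straight lines $A_t = (1-t)A_0 + tA_1$. This is not established by what you write: Golden--Thompson is a trace inequality and does not give convexity of $\norm{e^{-A_t}x}_K$; and the elementary log-convexity of $s \mapsto \norm{e^{sB}x}$ (which you prove spectrally for the Euclidean norm only) covers a one-parameter semigroup $e^{sB}$, whereas here you need the two-parameter form $e^{-A_0 + tB}$ with $A_0$ and $B = A_0 - A_1$ non-commuting, and a general $K$-norm rather than the Euclidean one. Whether this geodesic convexity even holds for arbitrary $K$ is not clear, and you flag this yourself (``the main obstacle here is making the strict convexity rigorous''). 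The paper avoids the issue entirely with a simpler linear argument: writing $S = T^{-1}$, the objective $\ell_2(T(K)) = \bigl(\E\norm{S G_N}_K^2\bigr)^{1/2}$ is a norm in the matrix variable $S$ (a composition of the linear map $S \mapsto S G_N$, the norm $\norm{\cdot}_K$, and the $L^2(\gamma_N)$-norm — this is the ``triangle inequality in $L^2$''), hence convex on the ambient space $\M_N(\R)$; the constraint $\det S = 1$ and the strict concavity of $\det^{1/N}$ on $\PD_N(\R)$ outside rays then force any two distinct minimizers $S_0 \neq S_1$ in $\SPD_N(\R)$ to produce, after rescaling $\frac{S_0+S_1}{2}$ back to determinant $1$, a strictly smaller value, a contradiction. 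This sidesteps the exponential chart and any noncommutativity issues, and is in fact the classical argument from \cite{l-position} that you suggest invoking. So: same skeleton, but for the uniqueness step you should replace the geodesic-convexity claim by the convexity-in-$T^{-1}$ plus Minkowski determinant inequality argument, or else supply a real proof of the convexity of $t \mapsto \norm{e^{-A_t}x}_K$.
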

\begin{proof}
Let $T \in \GL_{N}(\R)$ be any minimizer of (\ref{eq:l-pos-min2}). By polar decomposition, we may write it as $U P$ with $U \in O(N)$ and $P \in \SPD_{N}(\R)$. By invariance of $\gamma_{N}$ under $O(N)$, it follows that $P$ is also a minimizer. Moreover, a minimizer of (\ref{eq:l-pos-min2}) in $\SPD_{N}(\R)$ is necessarily unique, as easily follows from the triangle inequality in $L^2$ and the fact that $\det^{1/{N}}$ is a concave function on $\PD_{N}(\R)$ which is strictly concave outside of rays. Since $g K = K$ for $g \in \G$ and since $\G < O(N)$, any $g^{-1} P g$ is also a minimizer of (\ref{eq:l-pos-min2}), and so by uniqueness over $\SPD_{N}(\R)$ it follows that $g^{-1} P g = P$, i.e. $g P = P g$.

It remains to note that when $K$ is circled in $\C^n \simeq \R^{2n}$, as $i K = K$, we may apply the above reasoning to $g = i$ and deduce that $i P = P i$.  But a $T \in \M_{2n}(\R)$ is in $\M_n(\C)$ iff $T$ commutes with $i$, and so we deduce that $P \in \M_{n}(\C)$ and hence in $\SPD_n(\C)$. 
\end{proof}

\medskip

Note that by a standard contraction principle \cite[(5.2.1)]{AGA-Book-I}, if $E$ is any $\R$-linear subspace of $\F^n$, then:
\[
\ell(K \cap E) \leq \ell(K) ~,~ \ell^*(P_E K) \leq \ell^*(K) ,
\]
where $P_E$ denotes orthogonal projection in $(\R^N,\scalar{\cdot,\cdot}_\R)$. By testing this on a one-dimensional $\F$-linear subspace $E$, it follows that:
\begin{equation} \label{eq:al*}
\frac{1}{r(K)} \leq \begin{cases} \sqrt{\pi/2} \; \ell(K) & \F = \R \\ \ell(K) & \F = \C \end{cases}  ~,~  
R(K) \leq \begin{cases} \sqrt{\pi/2} \; \ell^*(K) & \F = \R \\ \ell^*(K) & \F = \C \end{cases} .
\end{equation}

\subsection{Low-$M^*$-estimate}

Given an origin-symmetric convex body $K$ in $\F^n$ (which incidentally need not be circled), 
recall the definition of the random Gelfand numbers $cr^{\F}_k(K)$, defined for $k=1,\ldots,n$ as:
\[
cr^{\F}_k(K) := \inf \set{ R > 0 \; ; \;  \sigma_{n,n-k+1}\set{ F \in G_{n,n-k+1}(\F) \; ; \; R(K \cap F) >  R } \leq \exp(-c k) }  ,
\]
for an appropriate predetermined constant $c > 0$. These numbers provide information on the diameter of a typical section of $K$ by a $(k-1)$-codimensional \textbf{$\F$-linear} subspace (whose dimension is determined over $\F$). When $K \subset \C^n$, we may choose to also consider $cr^{\R}_k(K)$ corresponding  to $(k-1)$-codimensional $\R$-linear subspaces ($k=1,\ldots,2n$) of $\C^n \simeq \R^{2n}$:
\[
cr_k^{\R}(K)  = \inf \set{ R > 0 \; ; \;  \sigma_{2n,2n-k+1}\set{ F \in G_{2n,2n-k+1}(\R) \; ; \; R(K \cap F) >  R } \leq \exp(-c k) }.
\]

The optimal form of Milman's low-$M^*$-estimate due to Pajor--Tomczak-Jaegermann \cite{PajorTomczakLowMStar} (cf. \cite[Theorem 7.3.1]{AGA-Book-I}) asserts that:
\begin{equation} \label{eq:low-M*}
\sup_{k=1,\ldots,n} \sqrt{k} \; cr_k^{\F}(K) \leq C \ell^*(K) . 
\end{equation}
Note that this applies to the case $\F = \C$ as well with exactly the same proof (perhaps modifying the value of $C$). The reason is that $U(n)$ acts transitively on $S^{N-1}$ and thus the measure $\sigma_N$ is the the unique Haar probability measure under the actions of both $O(2n)$ and its subgroup $U(n)$; consequently the measures $\sigma_{n,n-k+1}(\C)$ on $G_{n,n-k+1}(\C)$ and $\sigma_N$ on $S^{N-1}$ are compatible, and all of the usual concentration estimates apply to $U(n)$ and $G_{n,n-k+1}(\C)$; of course, when constructing $\eps$-nets on subspaces and applying volumetric estimates, the pertinent dimensionality parameter is the one over $\R$ and not $\C$, but this factor of $2$ only results in a change of numeric constants. 

For completeness, we comment on the nomenclature ``low-$M^*$-estimate". It is easy to check using polar coordinates that $\ell^*(K) = (1+o(1)) \sqrt{n} M^*(K)$ where $M^*(K) = \int_{S^{N-1}} \norm{\theta}_K^* d\sigma_N(\theta)$. A lower bound on $M^*(K)$ in terms of $cr_k(K)$ was first obtained by Milman in \cite{Milman-Asterisque,Milman-QOS-Better-low-MStar}, but with worse dependence on $n/k$; the square-root dependence in (\ref{eq:low-M*}) established in \cite{PajorTomczakLowMStar} is best-possible. In fact, it was shown by Y.~Gordon \cite{Gordon-lowMStar} that the constant $C$ in (\ref{eq:low-M*}) may be taken to be arbitrarily close to $1$ \cite[Theorem 7.3.5]{AGA-Book-I}, and if one replaces $cr_k(K)$ by $c_k(K)$, one may actually use $C = 1+o(1)$ \cite[Theorem 7.7.1]{AGA-Book-I}.

\section{A new position via a fixed-point theorem} \label{sec:fixedpoint}

In the next section we will provide a proof of Theorem \ref{thm:main1} on the existence of an $\alpha$-regular $P$-typical-position. We do so by revisiting Pisier's argument from \cite[Theorem 7.13]{Pisier-Book}, but ultimately putting the body $K$ in a different position than Pisier. The main new challenge lies in justifying the existence of such a position, which is the goal of the present section.

Let $S = \{ z \in \C \; ; \; 0 < \text{Re}(z) < 1\}$ denote the unit complex strip, and let $\bar S$ denote its closure. We denote by $\FF_n$ the space of bounded continuous functions $f : \bar S \rightarrow \C^n$ which are holomorphic on $S$ (in each coordinate). Given two circled convex bodies $K_0,K_1$ in $\C^n$ and a parameter $\theta \in [0,1]$, the complex-interpolant body $[K_0 , K_1]_\theta$ is defined as:
\[
[K_0,K_1]_\theta := \{ f(\theta) \; ; \; f \in \FF_n ~,~ f(it)\in K_0 ~,~ f(1+it) \in K_1 \;\; \forall t \in \R \} .
\]
Clearly $[K_0,K_1]_0 = K_0$, $[K_0,K_1]_1 = K_1$, and $K_\theta := [K_0,K_1]_\theta$ is a circled convex body in $\C^n$ for all $\theta \in [0,1]$. It is easy to check that $K_\theta$ is precisely the unit-ball of the norm $\norm{\cdot}_\theta$ obtained by the complex-interpolation method of Calder\'on and Lions applied to the norms $\norm{\cdot}_0, \norm{\cdot}_1$ on $\C^n$ associated to $K_0,K_1$, respectively (cf. \cite{BL-InterpolationSpaces-Book,Pisier-Book}). The following are well-known properties of complex-interpolation (see e.g. \cite{Pisier-Book}):
\begin{align}
\label{eq:inter-inq}          & \norm{x}_\theta \leq \norm{x}_0^{1-\theta} \norm{x}_1^{\theta} \;\;\; \forall x \in \C^n , \\
\label{eq:inter-polar}      & [K_0,K_1]_\theta^{\circ} = [K_0^{\circ},K_1^{\circ}]_\theta , \\
\label{eq:inter-lin}           & T([K_0,K_1]_{\theta}) = [T(K_0),T(K_1)]_\theta \;\;\; \forall T \in \GL_n(\C) , \\
\label{eq:inter-ab}            & [a K_0,b K_1]_\theta = a^{1-\theta} b^{\theta} [K_0,K_1]_\theta \;\;\; \forall a,b > 0 . 
\end{align}
In addition, it will be crucial for us to note the following simple observation pertaining to the linear operator:
\[
\bar Z : \C^n \ni x+i y \mapsto x - i y \in \C^n  .
\]
\begin{lemma} \label{lem:barZ}
Assume that $K_0,K_1$ are invariant under $g \in U(n)$ or under $\bar Z \in O(2n)$. Then so is $[K_0,K_1]_\theta$ for all $\theta \in [0,1]$. 
\end{lemma}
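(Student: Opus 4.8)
The plan is to prove the claim directly from the definition of complex interpolation by showing that the relevant symmetry of $K_0,K_1$ descends to the interpolation family of functions $\FF_n$, and hence to $K_\theta = [K_0,K_1]_\theta$. The key point is that both $g \in U(n)$ and $\bar Z \in O(2n)$ act $\C$-linearly on $\C^n$ in the appropriate sense: $g$ is $\C$-linear, while $\bar Z$ is only $\R$-linear but is \emph{conjugate-linear} (i.e. $\bar Z(\lambda x) = \bar\lambda \, \bar Z(x)$ for $\lambda \in \C$). In the first case, if $f \in \FF_n$ with $f(it) \in K_0$ and $f(1+it) \in K_1$ for all $t$, then $g \circ f$ is again bounded, continuous on $\bar S$, and holomorphic on $S$ (composition of a holomorphic vector-valued function with a fixed $\C$-linear map), and $(g\circ f)(it) = g f(it) \in g K_0 = K_0$, similarly on the right boundary; hence $g f(\theta) = (g \circ f)(\theta) \in K_\theta$. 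This shows $g K_\theta \subseteq K_\theta$, and since $g \in U(n)$ is invertible with $g^{-1} \in U(n)$ fixing $K_0, K_1$ as well (invariance under a single $g$ is equivalent to invariance under the group it generates, or one simply notes $g^{-1}K_j = K_j$ follows from $gK_j = K_j$), we get equality. Strictly, this is just property \eqref{eq:inter-lin} applied with $T = g \in \GL_n(\C)$: $g(K_\theta) = [gK_0, gK_1]_\theta = [K_0,K_1]_\theta = K_\theta$.

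The slightly more delicate case is $\bar Z$, which does not lie in $\GL_n(\C)$, so \eqref{eq:inter-lin} does not apply verbatim. Here I would use the observation that if $f : \bar S \to \C^n$ is holomorphic on $S$ in each coordinate, then the map $z \mapsto \bar Z\big(f(\bar z)\big)$ is \emph{also} holomorphic on $S$ in each coordinate: writing $f = (f_1,\dots,f_n)$, the $j$-th coordinate of $\bar Z(f(\bar z))$ is $\overline{f_j(\bar z)}$ (since $\bar Z$ conjugates each complex coordinate), and $\overline{f_j(\bar z)}$ is holomorphic in $z$ whenever $f_j$ is holomorphic — this is the standard Schwarz-reflection-type fact. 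Define $\tilde f(z) := \bar Z(f(\bar z))$. Since $z \mapsto \bar z$ preserves $\bar S$ and swaps each horizontal line $\{it + \R\}$... actually note $\overline{it} = -it$, so $\bar z$ maps the left boundary line to itself (as a set) and the right boundary line to itself. Thus $\tilde f$ is bounded continuous on $\bar S$, holomorphic on $S$, and $\tilde f(it) = \bar Z(f(-it)) \in \bar Z(K_0) = K_0$ using $f(-it) \in K_0$ and invariance of $K_0$ under $\bar Z$; likewise $\tilde f(1+it) = \bar Z(f(1 - it)) \in \bar Z(K_1) = K_1$. Moreover, for $\theta \in [0,1]$ we have $\bar\theta = \theta$, so $\tilde f(\theta) = \bar Z(f(\theta))$. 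Hence for any $x = f(\theta) \in K_\theta$ we get $\bar Z(x) = \tilde f(\theta) \in K_\theta$, i.e. $\bar Z(K_\theta) \subseteq K_\theta$; applying this with $\bar Z$ again (note $\bar Z^2 = \Id$) gives equality.

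I would then assemble these two cases into a short proof, perhaps unifying them by remarking that $\bar Z$ is an $\R$-linear isometry of $\R^{2n}$ which acts on holomorphic functions via $f \mapsto \bar Z \circ f \circ (\,\overline{\phantom{z}}\,)$, preserving $\FF_n$ up to this reflection, and that the boundary conditions are preserved because conjugation fixes $\bar S$ and fixes $[0,1]$ pointwise while mapping each boundary line of $S$ to itself. The main (minor) obstacle is being careful that the reflected function $\tilde f(z) = \bar Z(f(\bar z))$ really is holomorphic \emph{in $z$} on the open strip — this requires checking that coordinatewise $\overline{f_j(\bar z)}$ is holomorphic, which follows since if $f_j(z) = \sum a_k z^k$ locally then $\overline{f_j(\bar z)} = \sum \overline{a_k} z^k$, or equivalently from the Cauchy–Riemann equations being preserved under the double conjugation. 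Everything else (boundedness, continuity up to the boundary, the behavior of the boundary lines and of the point $\theta$ under $z\mapsto\bar z$) is immediate. No compactness or fixed-point input is needed for this lemma; it is purely a symmetry-transfer statement.
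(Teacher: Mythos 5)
Your proof is correct and follows essentially the same route as the paper: the $g\in U(n)$ case via \eqref{eq:inter-lin}, and the $\bar Z$ case via the reflected function $\tilde f(z)=\bar Z(f(\bar z))$, which is exactly the paper's observation that $\FF_n$ is closed under $f\mapsto\overline{f(\bar z)}$; you merely fill in the straightforward details that the paper leaves implicit.
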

\begin{proof}
If $g \in U(n)$ this is obvious from (\ref{eq:inter-lin}), or simply since the class of holomorphic functions $\FF_n$ is closed under linear transformations. To see this for $\bar Z$, note that $\FF_n$ is also closed under the mapping $f \mapsto \overline{f(\bar z)}$. 
\end{proof}

\medskip

Our main new observation in this section is the following:

\begin{proposition}  \label{prop:new-pos}
For any two circled convex bodies $K_0,K_1$ in $\C^n$ and $\theta \in [0,1)$, there exists a $T \in \PD_n(\C)$ so that $[T(K_0),K_1]_{\theta}$ is in $\ell$-position.

Moreover, if $\bar \G < O(2n)$ is the group generated by $U(n)$ and $\bar Z$, and $K_0,K_1$ are invariant under some subgroup $\G < \bar \G$,
then the above $T$ may be chosen to commute with all elements of $\G$, and hence $[T(K_0),K_1]_{\theta}$ is invariant under $\G$ as well.
\end{proposition}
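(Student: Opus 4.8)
The plan is to set up a self-map on the space of $\ell$-positions and apply a fixed-point theorem on a suitable topological retract. First I would parametrize positions by $\PD_n(\C)$ (or, after normalizing determinants, by $\SPD_n(\C)$). For each $T \in \PD_n(\C)$, form the interpolant $K_\theta(T) := [T(K_0), K_1]_\theta$ and let $\Phi(T) \in \SPD_n(\C)$ be the unique positive-definite determinant-one map putting $K_\theta(T)$ into $\ell$-position — this is well-defined and unique by Lemma~\ref{lem:l-sym} (applied over $\C$, i.e. with $\G \supseteq \{\Id, i\cdot\}$ acting on $\C^n \simeq \R^{2n}$, noting $K_\theta(T)$ is circled). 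A fixed point $\Phi(T_0) = c\, T_0$ (up to the scalar needed to match determinant normalizations, which can be absorbed using (\ref{eq:inter-ab})) would give exactly the statement: $[T_0(K_0),K_1]_\theta$ is already in $\ell$-position. One subtlety is bookkeeping with scalars — I would either work projectively on $\SPD_n(\C)$ or track the homogeneity factor $a^{1-\theta}$ from (\ref{eq:inter-ab}) carefully; since $\theta < 1$, the map $a \mapsto a^{1-\theta}$ is a genuine contraction-like rescaling in the logarithmic coordinate, which is what ultimately makes the fixed-point argument go through rather than merely being a tautology.

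Next, I would establish continuity of $\Phi$ and a compactness/retract structure. The map $T \mapsto K_\theta(T)$ is continuous in the Hausdorff metric (complex interpolation depends continuously on the endpoint bodies, via (\ref{eq:inter-inq})--(\ref{eq:inter-ab}) and standard estimates on the Calderón norms), and $L \mapsto \ell_2(L)$ is continuous, so the $\ell$-position assignment $\Phi$ is continuous by uniqueness of the minimizer (a standard argmin-continuity argument: minimizers of a continuously-varying strictly-quasiconvex functional over a compact set vary continuously). For compactness, I would use the a priori bounds from the excerpt: (\ref{eq:al*}) controls $r$ and $R$ of the $\ell$-position, which together with (\ref{eq:inter-inq}) bounds $\|\Phi(T)^{-1}\|_{op}$ from above; combined with $\det = 1$ this confines $\Phi$ to a compact subset of $\SPD_n(\C)$. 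The obstacle is that $\PD_n(\C)$ is noncompact (unbounded and with boundary at the degenerate cone), so I cannot invoke Brouwer directly — this is precisely why the authors flag a "generalized fixed-point theorem on topological retracts." I would identify a large enough compact convex (or at least contractible, retract-of-a-ball) region $\mathcal{C} \subset \SPD_n(\C)$ that $\Phi$ maps into itself; for instance an operator-norm ball in the logarithmic coordinate $\log \SPD_n(\C)$ (a Euclidean ball once we fix a basis of the Lie algebra), intersected appropriately, is a retract of a Euclidean ball, and the estimates above show $\Phi(\mathcal{C}) \subseteq \mathcal{C}$. Then the fixed-point theorem for continuous self-maps of an (absolute) retract yields $T_0$ with $\Phi(T_0) = T_0$, hence the first assertion.

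For the equivariant "moreover" part, I would run the same argument on the subspace of $\G$-invariant positive-definite maps. Concretely: if $K_0, K_1$ are $\G$-invariant with $\G < \bar\G$, then by Lemma~\ref{lem:barZ} so is $K_\theta(T)$ whenever $T$ commutes with $\G$; and by Lemma~\ref{lem:l-sym}, the $\ell$-position map $\Phi(T)$ of a $\G$-invariant body again commutes with $\G$. Hence $\Phi$ preserves the closed subcone $\PD_n(\C)^{\G} := \{T \in \PD_n(\C) : gT = Tg\ \forall g \in \G\}$ (a linear-subspace slice of $\PD_n(\C)$, again a convex cone, so in log-coordinates a linear subspace — its intersection with $\mathcal{C}$ is still a retract of a ball), and the fixed-point theorem applied within $\PD_n(\C)^{\G} \cap \mathcal{C}$ produces the required $\G$-commuting $T$; $\G$-invariance of $[T(K_0),K_1]_\theta$ then follows once more from Lemma~\ref{lem:barZ}. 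I expect the main obstacle to be the topological setup — verifying the invariance $\Phi(\mathcal{C}) \subseteq \mathcal{C}$ for a concrete, explicitly-retract region, and handling the determinant-normalization scalar cleanly — rather than any hard estimate; the continuity and compactness inputs are all standard consequences of the cited facts.
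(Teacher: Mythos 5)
There is a genuine gap at the heart of your argument: the self-map you propose has the wrong fixed-point structure. You define $\Phi(T)$ to be the (unique, $\SPD_n(\C)$-normalized) map putting $K_\theta(T) := [T(K_0),K_1]_\theta$ into $\ell$-position, and you assert that a fixed point $\Phi(T_0) = c\,T_0$ would give the conclusion. Unwinding the definition, $\Phi(T_0) = c\,T_0$ means that $c\,T_0([T_0(K_0),K_1]_\theta)$ is in $\ell$-position; by (\ref{eq:inter-lin}) this equals $[c\,T_0^2(K_0),\,c\,T_0(K_1)]_\theta$, which is not of the form $[T'(K_0),K_1]_\theta$ (the $K_1$-slot has been perturbed to $c\,T_0(K_1)$) and is certainly not the statement that $[T_0(K_0),K_1]_\theta$ is itself in $\ell$-position. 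A fixed point of your $\Phi$ therefore does not yield the proposition; you would need $\Phi(T_0)$ to be a multiple of $\Id$, which is not what a fixed-point theorem delivers. The paper resolves this by applying $T^{-1}$ to $K_1$ before interpolating: define $F(T)$ to be the unique $S \in \SPD_n(\C)$ such that $S\bigl([K_0, T^{-1}(K_1)]_\theta\bigr)$ is in $\ell$-position. Now $F(T) = T$ does close up, since $T\bigl([K_0, T^{-1}(K_1)]_\theta\bigr) = [T(K_0), K_1]_\theta$ by (\ref{eq:inter-lin}), which is exactly the desired conclusion. This reformulation is the essential device you are missing.

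Relatedly, the role of $\theta < 1$ is not the homogeneity factor $a^{1-\theta}$ from (\ref{eq:inter-ab}) that you highlight (in fact the paper works entirely in $\SPD_n(\C)$ with determinant $1$, so no scalar bookkeeping of that kind enters the fixed-point argument at all). Rather, $\theta < 1$ is what makes the self-map property hold on a compact $\Omega$: combining (\ref{eq:al*}), (\ref{eq:inter-inq}), (\ref{eq:inter-polar}) and (\ref{eq:ll*}), one shows $\frac{S_{\max}}{S_{\min}} \leq C(K_0,K_1,n)\,\bigl(\frac{T_{\max}}{T_{\min}}\bigr)^{\theta}$ for $S = F(T)$, and since the exponent $\theta$ is strictly less than $1$, the set $\Omega = \{T \in \SPD_n(\C) : T_{\max}/T_{\min} \leq M\}$ is invariant under $F$ for $M$ large enough. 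Once the map and invariance region are correct, the remaining ingredients you describe, namely continuity via uniqueness of the $\ell$-position minimizer, the retract/fixed-point property of $\Omega$ (your log-coordinate convexity observation is a valid alternative to the paper's direct contractibility verification), and the restriction of $F$ to the $\G$-commutant using Lemmas \ref{lem:l-sym} and \ref{lem:barZ}, are sound and match the paper's strategy.
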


Note that this is not at all obvious, since in general there is no explicit way to express $[T(K_0),K_1]_\theta$ by means of $T$ and $[K_0,K_1]_\theta$ (unless $T$ is a multiple of the identity, in which case (\ref{eq:inter-ab}) applies). We also remark that there is nothing special about the $\ell$-position in the above statement -- it equally holds for any position $\tilde K = T(K)$ which is invariant under $O(2n)$, uniquely defined over $\SPD_n(\C)$, varies continuously with $K$ and satisfies $r_n B_2^n \subset \tilde K \subset R_n B_2^n$ for some $r_n,R_n > 0$.

\medskip

For the proof, we will require a bit of background from topology. A topological space $\Omega$ is said to have the fixed-point property if any continuous $f : \Omega \rightarrow \Omega$ has a fixed-point $f(x) = x$. By Brouwer's theorem \cite{Munkres-TopologyBook2ndEd}, a compact ball in $\R^n$ has the fixed-point property, and hence so does any compact convex set in $\R^n$. However, we will avoid the question of whether the $\Omega$ we have in mind is homeomorphic to a compact ball, and so we will need to establish the existence of the fixed-point property by employing additional topological arguments from the theory of retracts, having its roots in the foundational work of Borsuk \cite{Borsuk-Retracts-Book}.

A subspace $\Omega$ of a topological space $Y$ is called a retract of $Y$ if there exists a continuous map $r : Y \rightarrow \Omega$ so that $r \circ \iota : \Omega \to \Omega$ is the identity map (where $\iota : \Omega \hookrightarrow Y$ is the inclusion map). Note that if $Y$ is Hausdorff, then $\Omega = \{y \in Y \; ; \; r(y) = y \}$ must be a closed subset of $Y$ \cite[I.(3.1)]{Borsuk-Retracts-Book}.

Observe that any retract $\Omega$ of a compact ball $B$ in $\R^n$ has the fixed-point property. Indeed, if $f : \Omega \rightarrow \Omega$ is continuous then $g := \iota \circ f \circ r : B \rightarrow B$ is continuous as well and has a fixed-point $y \in B$ by Brouwer's theorem; since $r \circ \iota$ is the identity map, it follows that $r(y) = r(g(y)) = f(r(y))$, and we confirm that $r(y) \in \Omega$ is a fixed-point for $f$. It remains to provide a good criterion for being a retract of a compact ball in $\R^n$. The following is known to experts:
\begin{lemma} \label{lem:retract} Any compact, contractible, and strictly locally contractible $\Omega \subset \R^n$ is a retract of a compact ball in $\R^n$ and hence has the fixed-point property. 
\end{lemma}
$\Omega$ is called strictly locally contractible if any open neighborhood $U$ of any point $x \in \Omega$ contains a contractible open neighborhood $V$ of $x$ (in fact, the weaker property of being locally contractible suffices for the statement above to hold, in which $V$ is only required to be contractible in $U$, not necessarily in itself). 
\begin{proof}
It was shown by Borsuk \cite[V.10.4]{Borsuk-Retracts-Book} (see \cite[V.10.7]{Borsuk-Retracts-Book} or \cite[V.7.1]{Hu-Retracts-Book} for the non-compact case) that $\Omega \subset \R^n$ is \emph{locally contractible} if and only if it is an Absolute Neighborhood Retract for the class of metrizable spaces ($\text{ANR}(\mathcal{M})$), meaning that whenever $\Omega$ is homeomorphic to a closed subset $\Omega'$ of a metrizable space $M$ then $\Omega'$ is a retract of an open neighborhood $N'$ of $\Omega'$ in $M$. In addition, Borsuk showed \cite[IV.9.1]{Borsuk-Retracts-Book} (cf. \cite[III.7.1-2]{Hu-Retracts-Book}) that a metrizable space $\Omega$ is a \emph{contractible} $\text{ANR}(\mathcal{M})$ if and only if $\Omega$ is an Absolute Retract for the class of metrizable spaces ($\text{AR}(\mathcal{M})$), meaning that whenever $\Omega$ is homeomorphic to a closed subset $\Omega'$ of a metrizable space $M$ then $\Omega'$ is a retract of $M$. 

We deduce by the above that any $\Omega \subset \R^n$ which is both locally contractible and contractible is an $\text{AR}(\mathcal{M})$. Consequently, if $\Omega$ is in addition closed then it is a retract of $\R^n$, and if $\Omega$ is in addition compact then it is a retract of a compact ball in $\R^n$. 
\end{proof}

We are now ready to prove Proposition \ref{prop:new-pos}:

\begin{proof}[Proof of Proposition \ref{prop:new-pos}]
Recall that $\SPD_n(\C)$ denotes the set of positive-definite maps on $\C^n$ having determinant $1$, which inherits the natural subspace topology from $\M_n(\C) \simeq \C^{n^2}$. Given $T \in \SPD_n(\C)$ and recalling Lemma \ref{lem:l-sym}, let $F(T)$ denote the unique element $S$ of $\SPD_n(\C)$ so that:
\[
 S([K_0,T^{-1}(K_1)]_\theta) \text{ is in $\ell$-position} . 
\] 
We will omit the external parentheses in the sequel. 
Since complex interpolation $[K_0,K_1]_\theta$ is clearly monotone in $K_0,K_1$ with respect to set-inclusion, $\SPD_n(\C) \ni T \mapsto K_T = [K_0,T^{-1}(K_1)]_\theta$ is continuous with respect to geometric distance on convex bodies. Consequently, the map
\[
 \SPD_n(\C) \times \SPD_n(\C) \ni (T,S) \mapsto \Lambda_2(T,S) := (\E \norm{G_N}^2_{S(K_T)})^{1/2} 
 \]
is jointly continuous in $(T,S)$,  and note that the $\ell$-position map $F(T)$ is defined as the \emph{unique} minimizer in $S$ of $\Lambda_2(T,\cdot)$. 
We will show below that $F$ maps an appropriate compact subset $\Omega \subset  \SPD_n(\C)$ into itself, and since $\Lambda_2$ is uniformly continuous on $\Omega \times \Omega$,  it follows that $\Omega \ni T \mapsto F(T) = \argmin \Lambda_2(T,\cdot)\in \Omega$ is necessarily continuous. 

For $T \in \SPD_n(\C)$, denote by $T_{\max},T_{\min} > 0$ the largest and smallest eigenvalues, respectively. 
Note that by (\ref{eq:al*}) and (\ref{eq:inter-inq}):
\[
 \ell^*(S [K_0,T^{-1}(K_1)]_\theta) \geq R(S [K_0,T^{-1}(K_1)]_\theta) \geq S_{\max} r([K_0,T^{-1}(K_1)]_\theta)  \geq S_{\max} r(K_0)^{1-\theta} (T_{\max}^{-1} r(K_1))^{\theta} ,
\]
and dually, using (\ref{eq:inter-polar}) and $(S^{-1})_{\max} = \frac{1}{S_{\min}}$:
\[
\ell(S [K_0,T^{-1}(K_1)]_\theta) \geq R(S^{-1} [K_0^{\circ},T(K_1^{\circ})]_\theta) \geq \frac{1}{S_{\min}} r([K_0^{\circ},T(K_1^{\circ})]_\theta) \geq \frac{1}{S_{\min}} r(K_0^{\circ})^{1-\theta} (T_{\min} r(K_1^{\circ}))^{\theta} .
\]
Since $S [K_0,T^{-1}(K_1)]_\theta$ is by definition in the $\ell$-position, we have by (\ref{eq:ll*}):
\[
\ell(S [K_0,T^{-1}(K_1)]_\theta) \ell^*(S [K_0,T^{-1}(K_1)]_\theta) \leq C n \log (1+n). 
\]
Multiplying the penultimate two inequalities and using that $R(K_i^{\circ}) = \frac{1}{r(K_i)}$, we conclude that:
\[
\frac{S_{\max}}{S_{\min}} \leq  C n \log(1+n) \brac{\frac{R(K_0)}{r(K_0)}}^{1-\theta} \brac{\frac{R(K_1)}{r(K_1)}}^{\theta}  \brac{\frac{T_{\max}}{T_{\min}}}^{\theta}   .
\]
Since $\theta \in [0,1)$, we see that for an appropriately large $M > 0$ we have:
\[
\frac{T_{\max}}{T_{\min}} \leq M \;\; \Rightarrow \;\; \frac{S_{\max}}{S_{\min}} \leq M . 
\]

Consequently, if we define $\Omega := \{ T \in \SPD_n(\C) \; ; \; T_{\max}/T_{\min} \leq M \}$, $\Omega$ is obviously compact and $F$ is a continuous map acting on $\Omega$.
Moreover, if $K_0,K_1$ are invariant under $\G$, we may further restrict the set $\Omega$ to only include those $T$'s which commute with $\G$ without affecting the above properties; indeed, if $T \in \Omega$ commutes with $\G$ then so does $T^{-1}$, hence $T^{-1}(K_1)$ is invariant under $\G$, and hence by Lemma \ref{lem:barZ} so is $[K_0,T^{-1}(K_1)]_\theta$, and hence $S$ which puts it in $\ell$-position commutes with $\G < O(2n)$ by Lemma \ref{lem:l-sym}, and so $F(T) \in \Omega$. 

 It is easy to see that $\Omega$ is a topological manifold-with-boundary and hence trivially strictly locally contractible. In fact, it is immediate to simultaneously verify contractiblity and strict local contractiblity around a given $T \in \Omega$, by using the contraction $\Omega \times [0,1]  \ni (S,t) \mapsto S_t := T^{t/2} S^{1-t} T^{t/2}$ and noting that $S_t \in \Omega$ for all $t \in [0,1]$ (as $\norm{A B}_{op} \leq \norm{A}_{op} \norm{B}_{op}$ and as functional calculus preserves commutation with normal operators). In summary, $\Omega \subset \M_n(\C) \simeq \C^{n^2}$ is contractible, strictly locally contractible and compact, and hence by Lemma \ref{lem:retract} has the fixed-point property. It follows that there exists $T \in \Omega$ so that $F(T) = T$, i.e. $T [K_0,T^{-1}(K_1)]_\theta = [T(K_0),K_1]_\theta$ is in $\ell$-position. This concludes the proof. 
\end{proof}

\section{Proof of Existence of $P$-Typical-Position} \label{sec:main1}

In this section we provide a proof of Theorem \ref{thm:main1} on the existence of an $\alpha$-regular $P$-typical-position.

One last ingredient we require is the following deep and crucial estimate of Pisier on the (Gaussian) Rademacher projection \cite[Theorem 7.11]{Pisier-Book} (cf. \cite{Pisier-ComplexInterpolation,Pisier-Type-Implies-K-Convex}) -- for any circled convex body $K$ in $\C^n$ and $\theta \in (0,1]$:
\begin{equation} \label{eq:inter-Rad}
K_\theta := [K,B_2^n]_{\theta} \;\; \Rightarrow \;\; \snorm{Rad_{X_{K_\theta}}}_{op} \leq \Phi(\theta) := \frac{1}{\tan(\pi \theta / 4)} .
\end{equation}
We mention in passing that from this estimate one may easily deduce the estimate (\ref{eq:Pisier-Rad}) -- see the remark following the proof of \cite[Theorem 7.11]{Pisier-Book}. 

\begin{proof}[Proof of Theorem \ref{thm:main1}] \hfill

\smallskip
\noindent
\textbf{Case of $\C^n$.}

We first treat the case that $K$ is a circled convex body in $\C^n$. Given $\alpha > 1/2$, set $\theta = 1 - \frac{1}{2\alpha} \in (0,1)$. By  Proposition \ref{prop:new-pos}, there exists 
$T = T_\alpha \in \PD_n(\C)$ so that $[T(K),B_2^n]_\theta$ is in $\ell$-position. By (\ref{eq:l-position}) and (\ref{eq:inter-Rad}), we obtain:
\[
\ell([T(K),B_2^n]_\theta) \ell^*([T(K),B_2^n]_\theta) \leq 2n \Phi(\theta) . 
\]
Consequently, by defining our desired position $\bar K_\alpha$ as $a T(K)$ for an appropriately chosen constant $a > 0$ and invoking (\ref{eq:inter-ab}) and (\ref{eq:inter-polar}), we ensure the individual estimates:
\begin{equation} \label{eq:scaling-T}
\ell^*([\bar K_\alpha,B_2^n]_\theta) \leq \sqrt{2 n \Phi(\theta)} ~,~ \ell^*([\bar K_\alpha^{\circ},B_2^n]_\theta) =  \ell([\bar K_\alpha,B_2^n]_\theta) \leq \sqrt{2 n\Phi(\theta)} .
\end{equation}
By the low-$M^*$-estimate (\ref{eq:low-M*}), we deduce:
\[
\sup_{k=1,\ldots,n} \sqrt{k} \; cr^{\C}_k([L,B_2^n]_\theta) ~,~ \sup_{k=1,\ldots,2n} \sqrt{k} \; cr_k^{\R}([L,B_2^n]_\theta) \leq C \sqrt{n} \sqrt{\Phi(\theta)} \;\;\; \forall L = \bar K_\alpha, \bar K_\alpha^{\circ} . 
\]
It remains to note that by (\ref{eq:inter-inq}), for any circled convex body $L$ and $\R$-linear subspace $E$:
\[
R([L,B_2^n]_\theta \cap E) \geq R(L \cap E)^{1-\theta} .
\]
Recalling that $\frac{1}{1 - \theta} = 2 \alpha$, it follows that:
\begin{equation} \label{eq:C-case}
\sup_{k=1,\ldots,n} k^{\alpha} \; cr^{\C}_k(L) ~,~ \sup_{k=1,\ldots,2n} k^{\alpha} \; cr^{\R}_k(L)  \leq C^{2 \alpha} n^\alpha \Phi(\theta)^{\alpha} \;\;\; \forall L = \bar K_\alpha, \bar K_\alpha^{\circ} .
\end{equation}
This confirms the assertion of Theorem \ref{thm:main1} for circled convex bodies $K$ in $\C^n$ with $\bar P_\alpha := C^{2 \alpha} \Phi( 1 - \frac{1}{2\alpha})^{\alpha}$. Recalling the definition of $\Phi$ in (\ref{eq:inter-Rad}), we immediately verify that $P_\alpha = \bar P_\alpha$ satisfies the asserted asymptotic behaviour (\ref{eq:P-alpha}) as $\alpha \rightarrow 1/2$. Note that we have confirmed that both typical $\C$-linear and typical $\R$-linear subspaces satisfy the above $\alpha$-regular estimates. 

\smallskip
\noindent
\textbf{Case of $\R^n$.}

It remains to treat the case when $K$ is an origin-symmetric convex body in $\R^n$. Following Pisier \cite[pp. 118-119]{Pisier-Book}, the idea is to reduce to the $\C^n$ case by complexifying $K$ into a circled convex body $K^{\C}$. Unfortunately, the reduction is not as direct as in \cite{Pisier-Book}, since we need to not only find a single projection of $K$ (in a suitable position) which has large in-radius, but rather confirm this for \emph{typical} orthogonal projections, and so we will need to use an additional symmetry of the map $T$ for which $[T(K^{\C}),B_2^n]_\theta$ is in $\ell$-position to ensure this. 

Let $X_K = (\R^n,\norm{\cdot}_K)$, and consider any complexification $X_K^{\C} = (\C^n , \norm{\cdot}_{K^{\C}})$ so that the inclusion $\iota : \R^n \hookrightarrow \C^n$ is an isometric embedding of $X_K$ into $X_K^{\C}$, 
and in addition $\norm{\cdot}_{K^{\C}}$ is invariant under $\bar Z$. Denoting:
\[
E_{\Re} = \iota(\R^n) = \{ x+i y \in \C^n \; ; \; y = 0 \} ,
\]
these two properties guarantee that $K^{\C} \cap E_{\Re} = K$ (obviously) but also $P_{E_\Re} K^{\C} = K$, since if $\norm{x+iy}_{K^{\C}} \leq 1$, then by $\bar Z$-invariance $\norm{x-iy}_{K^{\C}} \leq 1$, and so by the triangle inequality $\norm{x}_K = \norm{x}_{K^{\C}} \leq 1$ as well.

For instance, following Pisier, one may construct $X_K^{\C}$ by endowing $\C^* \otimes_{\R} X_K$ with the injective tensor product norm, i.e. $X_K^{\C}$ is the space of $\R$-linear operators $B$ from $\C$ to $X_K$ with the operator norm (and $z \cdot B = B \circ z$ for $z \in \C$).
 In more pedestrian terms, this means: 
 \[
\norm{x+ i y}_{X_K^\C} = \max_{\theta \in [0,2\pi]} \norm{\cos \theta \; x + \sin \theta \; y}_{K} = \max_{\theta \in [0, 2 \pi]} \snorm{ \text{Re} \; (e^{i\theta} \cdot (x+iy)) }_K ,
\]
where $\text{Re}(x + i y) = x$ is the natural orthogonal projection on $(\R^{2n},\scalar{\cdot,\cdot}_\R)$. 
Note that instead of using the $L^\infty([0,2\pi])$-norm above, we can equally use any $L^p([0,2\pi])$-norm (suitably normalized).

By Proposition \ref{prop:new-pos}, there exists $T \in \PD_n(\C)$ so that $[T(K^{\C}),B_2^n]_\theta$ is in $\ell$-position. Moreover, since $K^{\C}$ and $B_2^n$ are $\bar Z$-invariant, we may ensure  by Proposition \ref{prop:new-pos} that $T$ commutes with $\bar Z$ and hence $T(K^{\C})$ and $[T(K^{\C}),B_2^n]_\theta$ are $\bar Z$-invariant as well; these two latter properties will be crucial for us. 
Any $L \in \M_n(\C)$ may be written as $L = A + i B$ with $A,B \in \M_n(\R)$, and hence $L \in \M_n(\C)$ commutes with $\bar Z$ iff $L \in \M_n(\R)$ (i.e. $B = 0$). Consequently, our map $T$ from above is in fact in $\PD_n(\R)$, and $T(K^{\C}) \cap E_\Re = T( K^{\C} \cap E_\Re) = T(K)$. Since $T(K^{\C})$ is $\bar Z$-invariant, the argument from three paragraphs above implies that $P_{E_\Re} T(K^{\C}) = T(K^{\C}) \cap E_\Re = T(K)$. Hence $T(K^{\C})^{\circ} \cap E_\Re = (P_{E_\Re} T(K^{\C}))^{\circ} = T(K)^{\circ}$. All of the arguments involving the $\bar Z$-invariance were ultimately aimed at establishing that:
\begin{equation} \label{eq:aligned1}
T(K^{\C}) \cap E_\Re = T(K) \text{ and } T(K^{\C})^{\circ} \cap E_\Re = T(K)^{\circ} .
\end{equation}
We remark that in Pisier's original argument \cite[p. 119]{Pisier-Book}, $T(E_{\Re})$ is not guaranteed to coincide with $E_{\Re}$, and moreover $T(K^{\C})^{\circ} \cap T(E_\Re)$ will in general not coincide with $T(K)^{\circ}$. In other words, the map $T \circ \text{Re} \circ T^{-1}$ is not guaranteed to be an orthogonal projection onto $T(E_{\Re})$, and so the orthogonal projection of $T(K^{\C})$ onto $T(E_{\Re})$ will in general not coincide with $T(K)$. Consequently, we would not be able to identify between orthogonal projections of $T(K^{\C})$ onto subspaces $F \subset T(E_{\Re})$ and the corresponding orthogonal projections of $T(K)$. By using the $\bar Z$-invariance as above, we are able to overcome this obstacle. 

We are now ready to reduce to the complex case. By (\ref{eq:C-case}), setting $\bar K^{\C}_\alpha = a \, T(K^{\C})$ for an appropriate constant $a > 0$, we have for all $k=1,\ldots,2n$:
\begin{equation} \label{eq:real-prob1}
 \sigma_{2n,2n-k+1} \{ F \in G_{2n,2n-k+1}(\R) \; ; \; R(L^{\C} \cap F) > \bar P_\alpha \brac{\frac{n}{k}}^\alpha \} \leq \exp(-c k) \;\;\; \forall L = \bar K_\alpha, \bar K_\alpha^{\circ} .
\end{equation}
Setting $\bar K_\alpha = a \, T(K)$, (\ref{eq:aligned1}) translates to:
\[
L^{\C} \cap E_{\Re} = L \;\;\; \forall L = \bar K_\alpha , \bar K_\alpha^{\circ} ,
\]
and so for any $\R$-linear subspace $F$:
\[ R( L^{\C} \cap F )  \geq R( L \cap (F \cap E_{\Re}))   \;\;\; \forall L = \bar K_\alpha , \bar K_\alpha^{\circ} .
\] Given $L =\bar K_\alpha, \bar K_\alpha^{\circ}$, $k=1,\ldots,n$, it follows that for any $R > 0$:
\begin{align*}
        & \sigma_{2n,2n-k+1} \{ F \in G_{2n,2n-k+1}(\R) \; ; \; R(L^{\C} \cap F) > R \} \\
\geq \;\; & \sigma_{2n,2n-k+1} \{ F \in G_{2n,2n-k+1}(\R) \; ; \; R(L \cap (F \cap E_{\Re})) > R \} \\
 =     \;\; &  \sigma_{E_{\Re},n-k+1} \{ F \in G_{E_{\Re},n-k+1}(\R) \; ; \; R(L \cap F) >  R \} .
\end{align*}
The last equality holds since if $F$ is distributed according to $\sigma_{2n,2n-k+1} = \sigma_{2n,2n-k+1}(\R)$, then with probability $1$, $F \cap E_{\Re}$ is a $(k-1)$-codimensional $\R$-linear subspace of $E_{\Re}$  ($G_{E_{\Re},n-k+1}$ denotes the corresponding Grassmannian), and since its distribution is invariant under the action of the orthogonal group on $E_{\Re}$, it is necessarily distributed according to $\sigma_{E_{\Re},n-k+1}$, the (unique) Haar probability measure on $G_{E_{\Re},n-k+1}(\R)$. Thus, it immediately follows from (\ref{eq:real-prob1}) that for all $k=1,\ldots,n$:
\[
\sigma_{E_{\Re},n-k+1} \set{ F \in G_{E_{\Re},n-k+1}(\R) \; ; \; R(L \cap F) > \bar P_\alpha \brac{\frac{n}{k}}^\alpha } \leq \exp(-c k) \;\;\; \forall L = \bar K_\alpha, \bar K_\alpha^{\circ}  ,
\]
thereby concluding the proof. 
\end{proof}

\subsection{Comparison with Pisier's argument} \label{subsec:comparison}

It is worthwhile to compare the above argument with Pisier's original one from \cite{Pisier-Regular-M-Position} (cf. \cite[Theorem 7.13]{Pisier-Book}), 
in order to see why the latter only yields the existence of a \emph{single} good section (as opposed to with high-probability). Given two origin-symmetric convex bodies $K,L$ in a linear space $E$, let $R_L(K)$ denote the out-radius of $K$ w.r.t. the norm structure induced by $L$, namely $\max_{x \in E} \norm{x}_L/\norm{x}_K$. We sketch Pisier's argument in the complex case (using our geometric notation); the reduction from $\R^n$ to $\C^n$ is similar to the one described above.

 Let $K$ be a circled convex body in $\C^n$, and assume w.l.o.g. that it is in an $\alpha$-regular $P$-position (ensuring only \emph{existence} of sections) with best possible constant $P_\alpha$. Consider $[K,B_2^n]_{\theta}$ with $\theta = 1 - \frac{1}{2\alpha}$, and let $T \in \GL_n(\C)$ be such that $T([K,B_2^n]_{\theta})$ is in $\ell$-position (and scaled so that (\ref{eq:scaling-T}) holds). Then for any two subspaces $E_1,E_2 \in G_{n,n-k+1}$, denoting $E = E_1 \cap E_2$, we have by ``triangle inequality" and (\ref{eq:inter-inq}):
\begin{align*}
R_{T^{-1}(B_2^n) \cap E}(K \cap E) & \leq R_{T^{-1}(B_2^n) \cap E_1}([K,B_2^n]_{\theta} \cap E_1)  R_{[K,B_2^n]_{\theta} \cap E_2}(K \cap E_2) \\
& \leq R_{T^{-1}(B_2^n) \cap E_1}([K,B_2^n]_{\theta} \cap E_1)  R_{B_2^n \cap E_2}(K \cap E_2)^{\theta} . 
\end{align*}
By the low-$M^*$-estimate (\ref{eq:low-M*}), the first term on the right-hand side is bounded above by $C \sqrt{n/k} \sqrt{\Phi(\theta)}$ with high-probability on $E_1 \in G_{n,n-k+1}$ w.r.t. the Euclidean structure induced by $T$. As for the second term, there exists by definition $E_2 \in G_{n,n-k+1}$ so that $R_{B_2^n \cap E_2}(K \cap E_2) \leq P_\alpha (n/k)^{\alpha}$. Lastly, since $K$ was assumed to be in $\alpha$-regular $P$-position with best possible constant $P_\alpha$, it follows that there exists $k$ so that $R_{T^{-1}(B_2^n) \cap E}(K \cap E) \geq P_\alpha (n/(2k-1))^{\alpha}$ (perhaps after also applying the above argument to $K^{\circ}$). One can then conclude by solving the resulting equation for $P_\alpha$. 

We can attempt to modify the above argument by assuming that $K$ is in the $\alpha$-regular $P$-\emph{typical}-position (ensuring good sections with high-probability) with best possible constant $P_\alpha$. However, one quickly sees that the above bootstrap argument fails, since randomness over $E_2$ is taken w.r.t. the standard Euclidean structure, but randomness over $E_1$ is w.r.t. Euclidean structure induced by $T$, and so we cannot ensure that a good estimate holds for $E$ w.r.t. the standard Euclidean structure. Our fixed-point argument was precisely designed to address the latter incompatibility, and along the way slightly simplifies the overall argument since there is no longer any need for bootstrapping as above.

\section{Random Quotient-of-Subspace Theorem} \label{sec:QOS}

In this final section we prove a random version of the Quotient-of-Subspace Theorem. 

\medskip

We start with an elementary lemma:
\begin{lemma} \label{lem:perp}
Let $A$ be any subset of $\R^n$, and let $E_1,E_2$ denote two linear subspaces of $\R^n$. 
If $E_1 \supset E_2^{\perp}$ (equivalently $E_2 \supset E_1^{\perp}$) then $P_{E_1 \cap E_2} (A \cap E_1) = (P_{E_2} A)  \cap E_1$.
\end{lemma}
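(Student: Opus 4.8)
\textbf{Proof proposal for Lemma \ref{lem:perp}.}

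The plan is to prove the set equality $P_{E_1 \cap E_2}(A \cap E_1) = (P_{E_2} A) \cap E_1$ by a direct double-inclusion argument, leaning on the standard orthogonal-decomposition identities for projections. First I would fix notation: write every vector $a \in \R^n$ as $a = P_{E_2} a + P_{E_2^\perp} a$, and observe that the hypothesis $E_2^\perp \subset E_1$ (which is exactly equivalent to $E_1^\perp \subset E_2$ by taking orthogonal complements) means that $P_{E_2^\perp} a \in E_1$ for every $a$. The key consequence I want to extract at the outset is the following: for $a \in A \cap E_1$, the component $P_{E_2} a = a - P_{E_2^\perp} a$ is a difference of two elements of $E_1$, hence lies in $E_1$; therefore $P_{E_2} a \in E_2 \cap E_1$, and so $P_{E_1 \cap E_2}(P_{E_2} a) = P_{E_2} a$. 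Combined with the tower property $P_{E_1 \cap E_2} = P_{E_1 \cap E_2} \circ P_{E_2}$ (valid since $E_1 \cap E_2 \subset E_2$), this gives $P_{E_1 \cap E_2} a = P_{E_2} a$ for all $a \in A \cap E_1$; this single identity is really the heart of the lemma.

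For the inclusion ``$\subseteq$'': take $x \in P_{E_1 \cap E_2}(A \cap E_1)$, so $x = P_{E_1 \cap E_2} a$ for some $a \in A \cap E_1$. By the identity just established, $x = P_{E_2} a \in P_{E_2} A$; and since $x \in E_1 \cap E_2 \subset E_1$, we get $x \in (P_{E_2} A) \cap E_1$. For the reverse inclusion ``$\supseteq$'': take $y \in (P_{E_2} A) \cap E_1$, so $y = P_{E_2} a$ for some $a \in A$ and $y \in E_1$. I now need to produce a point of $A \cap E_1$ that projects under $P_{E_1 \cap E_2}$ to $y$. The natural candidate is not $a$ itself (which need not lie in $E_1$) but $a' := a - P_{E_1^\perp} a$, i.e. $a' = P_{E_1} a$. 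Since $E_1^\perp \subset E_2$ we have $P_{E_1^\perp} a \in E_2$, and in fact $P_{E_1^\perp} a \in E_1^\perp$, so $P_{E_2}(P_{E_1^\perp} a) = P_{E_1^\perp} a$; hence $P_{E_2} a' = P_{E_2} a - P_{E_2} P_{E_1^\perp} a = y - P_{E_1^\perp} a$. Here I should be slightly careful: the issue is that $a' \in E_1$ but a priori $a' \notin A$, so $a'$ is not directly a member of $A \cap E_1$.

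This is exactly the step I expect to be the main (and essentially only) obstacle, and it is resolved by a symmetry observation rather than more computation: the lemma is about an arbitrary subset $A$, so I should instead argue directly with the element $a \in A$ and show $y = P_{E_1 \cap E_2} a$ together with $a \in E_1$. But $a$ need not lie in $E_1$ — so the correct fix is to observe that $y = P_{E_2} a \in E_1$ forces $P_{E_2^\perp} a$ to satisfy a constraint. Concretely: since $E_2^\perp \subset E_1$, we have $a = P_{E_2} a + P_{E_2^\perp} a$ with $P_{E_2} a = y \in E_1$ and $P_{E_2^\perp} a \in E_1$, so in fact $a \in E_1$ automatically — that is, $a \in A \cap E_1$. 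Then by the heart identity from the first paragraph, $P_{E_1 \cap E_2} a = P_{E_2} a = y$, exhibiting $y \in P_{E_1 \cap E_2}(A \cap E_1)$. This closes the reverse inclusion and hence the proof. I would then remark parenthetically that the parenthetical ``equivalently $E_2 \supset E_1^\perp$'' in the statement is the elementary fact that $E_1 \supset E_2^\perp \iff E_2 = (E_2^\perp)^\perp \supset E_1^\perp$, used silently above.
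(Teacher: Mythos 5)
Your proof is correct, and it takes a genuinely different (though morally related) route from the paper's. The paper rewrites both sides in a uniform form
$\{ x \in E_1 \cap E_2 : \exists\, y \in S,\ x+y \in A\}$
— with $S = (E_1 \cap E_2)^\perp \cap E_1$ on the left and $S = E_2^\perp$ on the right — and then reduces the lemma to the single subspace identity $(E_1 \cap E_2)^\perp \cap E_1 = E_2^\perp$, which it verifies from $E_1 \supset E_2^\perp$ via $(E_1 \cap E_2)^\perp = E_1^\perp \oplus E_2^\perp$. You instead run a direct double inclusion through projection calculus, with the key intrinsic fact being that $P_{E_1 \cap E_2}a = P_{E_2}a$ for every $a \in E_1$ (obtained from the tower property $P_{E_1\cap E_2} = P_{E_1\cap E_2}\circ P_{E_2}$ together with the observation that $P_{E_2}a = a - P_{E_2^\perp}a \in E_1$), plus the complementary observation that $P_{E_2}a \in E_1$ forces $a \in E_1$ since $E_2^\perp \subset E_1$. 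The paper's argument is a bit more economical — one subspace identity settles both inclusions simultaneously — while yours isolates an operator identity ($P_{E_1\cap E_2}\restriction_{E_1} = P_{E_2}\restriction_{E_1}$) that is arguably more reusable and makes the mechanism of the lemma more transparent. One stylistic note: the detour through $a' = P_{E_1}a$ in your reverse inclusion is a dead end that you correctly abandon, and it should simply be deleted — the clean observation that $y = P_{E_2}a \in E_1$ already implies $a = y + P_{E_2^\perp}a \in E_1$ is all that is needed there.
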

\begin{proof}
By definition:
\begin{align*}
P_{E_1 \cap E_2} (A \cap E_1) & = \{ x \in E_1 \cap E_2 \; ; \; \exists y \in (E_1 \cap E_2)^{\perp} \cap E_1 \;\; x + y \in A \} ,\\
(P_{E_2} A)  \cap E_1 & = \{ x \in E_1 \cap E_2 \; ; \; \exists y \in E_2^{\perp} \;\; x + y \in A \},
\end{align*}
and so the two sets are equal if $E_1 \cap (E_1 \cap E_2)^{\perp} = E_2^{\perp}$. But the latter holds iff $E_1 \supset E_2^{\perp}$. Indeed, the ``only if" direction is trivial, and if $E_1 \supset E_2^{\perp}$ then:
\[
E_1 \cap (E_1 \cap E_2)^{\perp} = E_1 \cap (E_1^{\perp} \oplus E_2^{\perp}) = E_1 \cap E_2^{\perp} = E_2^{\perp} . 
\]
\end{proof}

Recall the definition of the flag manifold $G^k = \{ (F,E) \; ; \; \R^n \supset F \supset E ~,~ F \in G_{n,n-k+1} ~,~ E \in G_{n,n-2k+2} \}$ ($k \leq n/2$). Note that $O(n)$ acts transitively on the compact manifold $G^k$, thereby equipping it with a unique Haar probability measure which is invariant under the latter action.
This means that if $(F,E)$ is uniformly distributed on $G^k$ according to its Haar probability measure, then $(U(F),U(E)) \sim (F,E)$ for any $U \in O(n)$,  where we use the notation $X \sim Y$ to signify that $X$ and $Y$ are identically distributed. Note that in particular, $U(F) \sim F$ for any $U \in O(n)$, and so by uniqueness of the Haar measure on $G_{n,n-k+1}$ it follows that $F$ is uniformly distributed on $G_{n,n-k+1}$. While we will not require this here, it is easy to show that conditioned on $F=F_0 \in G_{n,n-k+1}$, $E$ is uniformly distributed on the Grassmannian $\{ E \in G_{n,n-2k+2} \; ; \; E \subset F_0 \}$ (for $\sigma_{n,n-k+1}$-a.e. $F_0$). 

\medskip
We are now ready to state and prove the following version of Theorem \ref{thm:main2}.

\begin{theorem} \label{thm:uniform-QOS}
Given an origin-symmetric convex body $K$ in $\R^n$ and $\alpha > 1/2$, let $\bar K_\alpha$ denote an $\alpha$-regular $P$-typical-position of $K$ given by Theorem \ref{thm:main1}. Then for every integer $k \in [1,n/2]$:
\begin{align*}
\P(d_{G}((P_F \bar K_\alpha) \cap E , B_2^E)  & \leq C (2\alpha - 1)^{-1} \brac{n/k}^{2\alpha})  \geq 1 - 2 \exp(-c k) , \\
\P( d_{G}(P_E( \bar K_\alpha \cap F) , B_2^E) & \leq C (2\alpha - 1)^{-1} \brac{n/k}^{2\alpha})  \geq 1 - 2 \exp(-c k) ,
\end{align*}
where $(F,E)$ is selected at random according to the Haar measure on the flag manifold $G^k$. 
\end{theorem}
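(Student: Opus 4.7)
The plan is to prove the two probabilistic bounds by reducing them, via polarity in $E$, to a single quantity whose outer and inner radii in $E$ are each controlled by \emph{one} application of Theorem~\ref{thm:main1}. The reduction rests on the fact that for $E \subset F$, projection and section in $F$ are polar dual operations, so polarising inside $E$ gives
\[
\bigl( (P_F \bar K_\alpha) \cap E \bigr)^{\circ}_E = P_E \bigl( (P_F \bar K_\alpha)^{\circ}_F \bigr) = P_E( \bar K_\alpha^{\circ} \cap F ).
\]
Hence the first inequality applied to $\bar K_\alpha$ is the same statement as the second inequality applied to $\bar K_\alpha^{\circ}$, and since $\bar K_\alpha^{\circ}$ is also in $\alpha$-regular $P$-typical-position, it suffices to establish the second bound.

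For the outer radius of $P_E(\bar K_\alpha \cap F)$ in $E$, I would apply Theorem~\ref{thm:main1} to $\bar K_\alpha$ with codimension $k-1$: with probability at least $1 - \exp(-ck)$ over the (Haar-uniform) marginal $F \in G_{n,n-k+1}$, one has $R(\bar K_\alpha \cap F) \leq \bar P_\alpha(n/k)^{\alpha}$, and orthogonal projection onto $E \subset F$ cannot increase this.

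The inner radius is the main step. By the same polarity argument in $F$, the polar of $P_E(\bar K_\alpha \cap F)$ inside $E$ equals $(P_F \bar K_\alpha^{\circ}) \cap E$, so it suffices to upper-bound the outer radius of the latter. The key trick will be to rewrite this body as a projection of a section by a \emph{full} subspace of $\R^n$. Setting $E_1 := E + F^{\perp}$, one checks $\dim E_1 = n-k+1$, $E_1 \supset F^{\perp}$, and $E_1 \cap F = E$, so Lemma~\ref{lem:perp} applied to $A = \bar K_\alpha^{\circ}$ with $(E_1, F)$ yields
\[
(P_F \bar K_\alpha^{\circ}) \cap E = (P_F \bar K_\alpha^{\circ}) \cap E_1 = P_E(\bar K_\alpha^{\circ} \cap E_1).
\]
The crucial distributional point is that the map $\phi : (F,E) \mapsto E + F^{\perp}$ from $G^k$ to $G_{n,n-k+1}$ is $O(n)$-equivariant, so it pushes the Haar measure on $G^k$ forward to the unique $O(n)$-invariant probability measure on $G_{n,n-k+1}$, namely $\sigma_{n,n-k+1}$. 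Thus $E_1$ is marginally Haar-distributed, and a \emph{second} application of Theorem~\ref{thm:main1} to $\bar K_\alpha^{\circ}$ gives $R(\bar K_\alpha^{\circ} \cap E_1) \leq \bar P_\alpha(n/k)^{\alpha}$, and hence $r(P_E(\bar K_\alpha \cap F)) \geq 1/(\bar P_\alpha(n/k)^{\alpha})$, with probability at least $1 - \exp(-ck)$.

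A union bound combines the two events into one of probability at least $1 - 2\exp(-ck)$, on which $d_G(P_E(\bar K_\alpha \cap F), B_2^E) \leq \bar P_\alpha^{\,2}(n/k)^{2\alpha}$; the asymptotic $\bar P_\alpha \leq C'/\sqrt{\alpha - 1/2}$ from~(\ref{eq:P-alpha}) then yields the claimed $C(2\alpha - 1)^{-1}(n/k)^{2\alpha}$. The conceptually delicate step will be the identification $(P_F \bar K_\alpha^{\circ}) \cap E = P_E(\bar K_\alpha^{\circ} \cap E_1)$ together with verifying that the auxiliary subspace $E_1 = E + F^\perp$ is itself marginally Haar-uniform on $G_{n,n-k+1}$; without this device, controlling the inner radius would seem to require bounding $\ell^{*}$ of the quotient body $P_F \bar K_\alpha$, which does not follow directly from the $P$-typical regularity hypothesis.
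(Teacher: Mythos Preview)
Your proof is correct and follows essentially the same route as the paper: the paper likewise sets $E_1 = F$ and $E_2 = F^\perp + E$ (your auxiliary subspace), invokes Lemma~\ref{lem:perp} to identify $P_{E}(\bar K_\alpha \cap F)$ with $(P_{E_2}\bar K_\alpha)\cap F$, observes that both $E_1$ and $E_2$ are marginally Haar-uniform on $G_{n,n-k+1}$ by $O(n)$-equivariance, and concludes via a union bound. The only cosmetic difference is that you phrase the inner-radius control via polarity in $E$ whereas the paper works directly with $P_{E_2}\bar K_\alpha \supset \bar R_{\alpha,k}^{-1} B_2^{E_2}$; the substance is identical.
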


Clearly, setting $\alpha = \frac{1}{2} + \frac{1}{\log(n/k)}$ above yields Theorem \ref{thm:main2}. Note that (\ref{eq:obviously}) is then an immediate consequence of Theorem \ref{thm:main1}. 

\begin{proof}[Proof of Theorem \ref{thm:uniform-QOS}]
Given an integer $k \in [1,n/2]$, set $\bar R_{\alpha,k} := \bar P_\alpha (n/k)^{\alpha}$, where $\bar P_\alpha$ is the constant from Theorem \ref{thm:main1}. Recall the asymptotic behavior (\ref{eq:P-alpha}) of $P_\alpha = \bar P_\alpha$ as $\alpha \rightarrow 1/2$, which implies that:
\begin{equation} \label{eq:QOS1}
\bar R_{\alpha,k}^2 \leq C (2\alpha - 1)^{-1} \brac{n/k}^{2\alpha} ,
\end{equation}
for some universal constant $C > 0$. 

Denote:
\[
A_1 = \{ E \in G_{n,n-k+1} \; ; \; R(\bar K_\alpha \cap E) \leq \bar R_{\alpha,k} \} ~,~ A_2 = \{ E \in G_{n,n-k+1} \; ; \; R(\bar K_\alpha^{\circ} \cap E) \leq \bar R_{\alpha,k} \} .
\]
Note that if $E_1 \in A_1$, $E_2 \in A_2$ and $E_1 \supset E_2^{\perp}$ then:
\begin{equation} \label{eq:QOS2}
d_G(P_{E_1 \cap E_2}(\bar K_\alpha \cap E_1) , B_2^{E_1\cap E_2}) = d_G( P_{E_2}(\bar K_\alpha) \cap E_1 , B_2^{E_1\cap E_2})  \leq \bar R_\alpha^2 .
\end{equation}
Indeed (invoking Lemma \ref{lem:perp} in the last implication):
\begin{align*}
E_1 \in A_1 \;\; & \Rightarrow \;\; \bar K_\alpha \cap E_1 \subset \bar R_{\alpha,k} B_2^{E_1} \;\; \Rightarrow \;\; P_{E_1 \cap E_2}(\bar K_\alpha \cap E_1)   \subset \bar R_{\alpha,k} B_2^{E_1\cap E_2} , \\
E_2 \in A_2 \;\; & \Rightarrow \;\; P_{E_2} \bar K_\alpha \supset \frac{1}{\bar R_{\alpha,k}} B_2^{E_2} \;\; \Rightarrow \;\; P_{E_2}(\bar K_\alpha) \cap E_1 \supset \frac{1}{\bar R_{\alpha,k}} B_2^{E_1\cap E_2} , \\
E_1 \supset E_2^{\perp} \;\; & \Leftrightarrow \;\; E_2 \supset E_1^{\perp} \;\; \Rightarrow \;\; P_{E_1 \cap E_2}(\bar K_\alpha \cap E_1) = P_{E_2}(\bar K_\alpha) \cap E_1 .
\end{align*}

Now let $(F,E)$ be uniformly distributed on the flag manifold $G^k$, and set $E_1 = F$, $E_2 = F^{\perp} + E$. Then $E_1,E_2 \in G_{n,n-k+1}$, $E_1 \supset E_2^{\perp}$, and $E_1 \cap E_2 = E$. Recalling (\ref{eq:QOS2}) and (\ref{eq:QOS1}), it follows that:
\begin{equation} \label{eq:dg1}
\P( d_{G}(P_E( \bar K_\alpha \cap F) , B_2^E)  \leq C (2\alpha - 1)^{-1} \brac{n/k}^{2\alpha} ) \geq \P(E_1 \in A_1 \text{ and } E_2 \in A_2) . 
\end{equation}
Clearly, by reversing the roles of $E_1,E_2$ above, we equally have:
\begin{equation} \label{eq:dg2}
\P( d_{G}((P_F \bar K_\alpha) \cap E , B_2^E) \leq C (2\alpha - 1)^{-1} \brac{n/k}^{2\alpha} ) \geq \P(E_1 \in A_1 \text{ and } E_2 \in A_2) .
\end{equation}
It remains to bound $\P(E_1 \in A_1 \text{ and } E_2 \in A_2)$ from below. 

To this end, note that both $E_1$ and $E_2$ are (individually) uniformly distributed on $G_{n,n-k+1}$. Indeed, for $E_1$ this is obvious since $E_1 = F$ and we have already established the uniformity of $F$. To see this for $E_2$, note that for any $U \in O(n)$, $U(F^{\perp} + E) = U(F)^{\perp} + U(E) \sim F^{\perp} + E$, and the uniformity of $E_2$ follows by the uniqueness of the Haar probability measure on $G_{n,n-k+1}$. Consequently, applying Theorem \ref{thm:main1}, we obtain by the union bound:
\[
1 - \P(E_1 \in A_1 \text{ and } E_2 \in A_2) \leq \P(E_1 \notin A_1) + \P(E_2 \notin A_2) \leq 2 \exp(-c k) . 
\]
Recalling (\ref{eq:dg1}) and (\ref{eq:dg2}), this concludes the proof. 
\end{proof}

\begin{remark}
As explained in Subsection \ref{subsec:relation}, any $\alpha$-regular $M$-position or $\alpha$-regular $P$-position is an $\alpha$-regular $P$-typical position with a worse constant $\bar P_\alpha$. Consequently, by using (\ref{eq:bad-P-alpha}) and repeating verbatim the above proof, the statement of Theorem \ref{thm:main2} also holds with the worse estimate $C \frac{n}{k} \log^3(\frac{n}{k})$ when $\bar K_k$ is in Pisier's $\alpha$-regular $M$-position or $\alpha$-regular $P$-position with constant $P_\alpha$ satisfying (\ref{eq:P-alpha}) and $\alpha = \frac{1}{2} + \frac{1}{\log(n/k)}$.
\end{remark}

\setlinespacing{1.1}

\bibliographystyle{plain}
\bibliography{../../../ConvexBib}

\end{document}